\def\OO{{\mathcal O}}
\def\g{\mathfrak g}
\def\Spec{\mathop{\rm Spec}\nolimits}
\newtheorem{theorem}{Theorem}
\newtheorem{proposition}[theorem]{Proposition}
\newtheorem{lemma}[theorem]{Lemma}
\newtheorem{definition}[theorem]{Definition}
\newtheorem{corollary}[theorem]{Corollary}
\newtheorem{remark}[theorem]{Remark}
\begin{document}

\baselineskip=16pt

\centerline{\Large\bf Rationality of the instability parabolic and related results}

\bigskip

\centerline{\bf Sudarshan Gurjar, Vikram Mehta}

\bigskip

\begin{abstract}
In this paper we study the extension of structure group of principal bundles with a reductive algebraic group
as structure group on smooth projective varieties defined over algebraically closed field of positive
characteristic. Our main result is to show that given a representation $\rho$ of a reductive algebraic group
$G$, there exists an
integer $t$ such that any semistable $G$-bundle whose first $t$ frobenius pullbacks are semistable induces a
semistable vector bundle on extension of structure group via $\rho$. Moreover we quantify the number of such
frobenius pullbacks required.

\end{abstract}

\section{Introduction}
Let $X$ be a smooth projective variety defined over an algebraically closed field $k$. Fix a
very ample line bundle $H$. Let $G$ be a reductive algebraic group defined over $k$. All representations
considered in this paper are rational finite-dimensional representations. Recall that a $G$-bundle is
semistable with respect to the polarisation $H$ if for any reduction of structure group to a parabolic
subgroup
$P$ and any dominant character of P, the induced line bundle on $X$ has non-positive degree.\\
$~$ Now let $\rho: G \rightarrow Gl(V)$ be a rational representation of $G$ sending the connected component
of the centre of $G$ to that of $Gl(V)$. If the characteristic $k$ is zero and $E$ is a semistable $G$-bundle
on $X$ then the induced
$Gl(V)$-bundle is also semistable. From this it follows easily that if the characteristic of the field is
\textquotedblleft sufficiently large \textquotedblright, then again a semistable $G$-bundle induces a
semistable $Gl(V)$-bundle. This is
quantified in [IMP]
where in it is shown that if char $k >$ ht $(\rho)$, then a semistable $G$-bundle induce a semistable
$Gl(V)$-bundle on extension of structure group. In positive characteristic however, it is in not in
general true that a semistable $G$ bundle will induce a semistable $Gl(V)$-bundle. 
  A principal $G$-bundle on $X$ is said to be strongly semistable if all its frobenius
pullbacks are
semistable. In char 0, the frobenius map is just identity and hence the notion of semistability and strong
semistability coincide. In Ramanan-Ramanathan [RR], it is shown that a strongly semistable $G$-bundle induces
a strongly semistable $Gl(V)$-bundle. This result is sharpened in the paper of Coiai-Holla [CH] where the
authors show that given a representation $\rho$ as before, there exists a non-negative integer $t$ such that
if
$E$ is any $G$-bundle on $X$ which along with its first $t$ frobenius pullbacks is semistable, then the
induced $Gl(V)$-bundle is again semistable. This fact is crutial in their proof of boundedness of semistable
$G$-bundles with fixed Chern classes. In this paper we give bounds for this $t$, in terms
of certain numerical data attached to $G$ and $\rho$. The main ingedient
of the proof is the use of the instability
parabolic (also known sometimes as the Kempf's parabolic) associated to points of the representing space (see
[Section 3] for
definition). The basic idea is as follows:
     Let $E$ be a principal $G$-bundle on $X$. Let $k(X)$ denote the function field of $X$. Let $E(G)$ be the
group scheme associated to $E$ (see section 2 for definition). Let $E_{Gl(V)}$ denote the induced $Gl(V)$
bundle. Let $E(G)_\circ$ denote the generic fiber of $E(G)$. It is a group scheme defined over the function
field of $X$. Let $P$ be any maximal parabolic in $Gl(V)$. Let
$E(Gl(V)/P)$ be the associated $Gl(V)/P$ fiber-space. Again let $E(Gl(V)/P)_\circ$ denote the generic fiber of
$E(Gl(V)/P)$. Then $E(G)_\circ$ acts on $E(Gl(V)/P)_\circ$ which is linearized by a suitable very ample line
bundle. If $E_{Gl(V)}$ admits a reduction of structure group to this maximal parabolic $P$, then we get a
section (canonically) $\sigma$ of $E(Gl(V)/P)$. Restricting to the generic fiber gives a $k(X)$-valued point
$\sigma_\circ$ 
of $E(Gl(V)/P)_\circ$. In [RR], it is shown that if either $\sigma_\circ$ is a semistable point for action
described above or its instability parabolic (see [Section 3] for definition), which is in general defined
over $\bar{k(X)}$, is actually defined
over $k(X)$, then this section (or equivalently this reduction) does not contradict semistability. In char 0,
using  uniqueness of the instability parabolic and Galois descent, this proves the semistability of the
induced bundles.
 In [CH] it is shown that that there exists a non-negative integer $t$ such that for all possible reductions
to
all the maximal parabolics the instability parabolics of points corresponding to these reductions is
actually defined over $k(X)^{p^{-t}}$. This can be shown to imply that if $E$ is a semistable principal
$G$-bundle with first $t$
frobenius pullbacks semistable, the induced $Gl(V)$-bundle is also semistable. 
  The main aim of this paper is to give bounds for this $t$ in terms of certain numerical data attached to 
$G$ and $\rho$. \\

\section{Basic definitions and preliminary notions}{\label{Basic definition}}

 In this section we set up some notations and recall some of basic definitions and facts which will be used
later. \\
$X$ will always denote a smooth projective variety over a field $k$. Let $G$
be a reductive algebraic group defined over $k$ and let $\g$ denote its lie algebra. Fix a maximal torus $T
\subset G$ and a Borel
$B$ containg $T$. Let $X_*(T)$ denote the set of 1-parameter subgroups of $T$ and let $X^*(T)$ be
the
character group of $T$. There exists a nondegenerate pairing, denoted $(\cdot , \cdot)$ : $X_{*}(T) \times
X^*(T)
\rightarrow \mathbb{Z}$.  Let $\Phi \subset X^*(T)$ be the set of roots of $G$. Let $\Phi^+$ denote the set of
positive roots corresponding to
the choice of $B$ in $G$ and $\Delta $ = \{$\alpha_1, \cdots ,\alpha_n \}$  a set of simple roots of $G$.
Corresponding to this choice of simple roots, there exists a set of elements $\omega_i \in
X_*(T) \otimes {\mathbb Q}$ known as the fundamental weights with the property that $\langle
\omega_i,\alpha_j \rangle=\delta_{ij}$.\\
For any root $\alpha$, there exists an isomorphism of $x_\alpha$ of $G_a$ with a closed subgroup $X_\alpha$
of $G$ with the property that $t \cdot x_\alpha(a) \cdot t^{-1} = x_\alpha(\alpha(t)a)$. $X_\alpha$ is known
as the root group associated to $\alpha$.\\
 By a parabolic in $G$, we mean a closed subgroup of $G$ containing $B$. There exists a natural bijection
of the set of subsets of $\Delta$ with the set of parabolic subgroups of $G$ containing $B$ under which for
a subset $I \subseteq \Delta$, we assign the parabolic $P_I$ to be the closed subgroup of $G$ generated by
$B$ and $X_{^+_- \alpha}$ for all roots $\alpha \in \Delta \backslash I$.
 Let $W=N(T)/T$ be the Weyl group. Fix a $W$-invariant inner product $\langle$ , $\rangle$ on  $X_*(T)
\otimes 
\mathbb {Q}$. Using this inner product we can define norm of any
1-PS $\lambda(t) \in T$ as $\mid \mid \lambda(t) \mid \mid  = \langle \lambda, \lambda \rangle$.
For a arbitrary 1-PS in $G$ we can conjugate it into the fixed maximal torus and then define its norm.
\\ We begin by recalling the definitions of semistability of vector and principal bundles with respect to
the fixed polarisation $H$. 
\begin{definition}
  For a vector bundle $E$ on $X$, define its slope to be the rational number:  
 \[  \mu(E)= deg(E)/rk(E).
 \]
 $~$A vector bundle $E$ on $X$ is said to be {\bf $\mu$-semistable} (w.r.t. the polarization $H$) if for
any proper subbundle $F \subset E$,
we have  the inequality $\mu(F) \leq \mu(E) $, where $\mu$ denotes the slope of the bundles. 
  
  For any vector bundle $E$, there exists a canonical filtration of $E$ by $\OO_X$-coherent subsheaves known
as its Harder-Narasimhan filtration (denoted HN$(E)$). 
\[
0 \subset E_1 \subset \cdots \subset E_l = E
\] 
with the property that successive quotients $E_i/E_{i-1}$ are $\mu$-semistable and $\mu (E_i/E_{i-1}) > \mu
(E_{i+1}/E_i)$ for all $1 \leq i \leq l$.

Define $\mu_{max}(E) = \mu (E_1)$ and by $\mu_{min}(E)=\mu(E_l/E_{l-1})$
    
    The quantity $\mu_{max}(E)-\mu_{min}(E)$ known as the {\bf instability degree} is a measure of the
instability
of the vector bundle. 
  
\end{definition}

  \begin{definition}
A principal $G$-bundle $E$ over $X$ is said to be semistable if for any reduction of structure group to a
parabolic $P$ of $G$ and any dominant character on $P$, the induced line bundle has degree $\leq$ 0.\\ 
$~$ Equivalently, a principal bundle $E$ on $X$ is said to be semistable is for any reduction of structure
group to a parabolic $P$ of $G$, the pullback of the relative tangent bundle of $E(G/P)$ over $X$, via the
section $\sigma: X \rightarrow E(G/P)$ corresponding to this reduction is a vector bundle on $X$ of degree
$\geq$ 0. 
  \end{definition}
     
 \begin{definition}\label{H-N filtration}
   Let $E$ be a principal $G$ bundle on $X$. Let $E_P$ be a reduction of structure group af $E$ to a
parabolic $P \subset G$. The reduction is said to be {\bf canonical} (or the {\bf Behrand reduction
}) if the following conditions are satisfied:

\indent 1) deg $E_P(P) > 0$\\

\indent 2) For any parabolic subgroup scheme $Q \subset E(G)$, deg $Q \leq$ deg $E_P(P)$.\\

\indent 3) For any subgroup scheme $Q \supset E_P(P)$, deg $Q <$ deg $P$.\\

\indent 4)The unipotent radical bundle $E_P(P)/R_u(P)$ is semistable.\\

With these conditions our definition of canonical reduction coincides with that of Behrend.
$P$ is known as the {\bf Behrend's parabolic}. The
degree of $E_P(P)$ is denoted by $deg_{HN}(E)$.
 \end{definition}
     
 $~~$ The canonical reduction can be shown to be equivalent to the following: For any nontrivial character on
$P$ which is a
non-negative                     
combination of simple roots with respect to the choice of $B$, the induced line bundle on $X$ obtained by
extension of structure group has non-negative degree.


\section{Frobenius morphism}{\label{Frobenius morphism}}
 Let $X$ be a scheme over a algebraically closed field of char $p>0$. The $p$-th power map $\OO_X \rightarrow
\OO_X$ given by $f 
\rightarrow f^p$ gives rise to a morphism of schemes $F_X: X \rightarrow X$ called the absolute frobenius. If
$k$ is a perfect field, this morphism is an isomorphism (although not a $k$-morphism in general). Let $F^m$
denote the iterated frobenius map. If $E$ is a $G$-bundle on $X$ we an take its pullback $F^{m^{*}}(E)$ which
will be a $F^{m^*}(G)$ bundle. We call this the $m$-th frobenius pullback. By twisting Spec $k$ by the
frobenius map (which will be an isomorphism), we can define a $k$-structure
on $F^{m^*}(X)$, $F^{m^*}(G)$ as well as $F^{m^*}(E)$. The $G$ bundle $F^{m^*}(E)$ on $X$ is the same as the
one obtained by extension of structure group under the homomorphism $G \rightarrow G$ given by the $m$-th
frobenius map.\\
 Clearly if the frobenius pullback of a $G$-bundle is semistable with respect to the pulled back polarization,
then so is the original bundle. A semistable $G$ bundle may not however pullback to a semistable $G$-bundle.
A $G$-bundle $E$ is said to be {\bf strongly semistable} if all its frobenius pullbacks are also semistable.

\section{The instability parabolic}{\label{The instability parabolic}}        
      
$~$ In this section we discuss the role of the instability parabolic which plays an important role in studying
extension of structure groups in positive characteristic. We first begin by recalling some elementary notions
and
facts from Geometric Invariant Theory. \\
 \indent Let $K$ be an algebraically closed field. Let $G$ be a reductive algebraic group defined over $K$.  
Let $\rho: G \rightarrow Gl(V)$ be a representation of $G$ defined over $K$. A vector $v
\in V$ is said to
be semistable for the $G$-action if $0 \notin \bar{Gv}$. Equivalently there exists a G-invariant $\phi \in
S^n(V)$ for some $n>0$ such that $\phi(v) \neq 0$.\\
For a 1-PS $\lambda(t)$ of $G$ we get a decomposition of $V = \oplus~
V_i$, where $V_i=\{v \in V \mid \lambda(t)(v)=t^i(v) \} $.\\
Define $m(v,\lambda)$ = min \{i $\mid v$ has a nonzero component in $V_i$\}.\\
Define slope of the 1-PS $\lambda(t)$ by
\[
\nu(\lambda,v)= m(v,\lambda) / \mid \mid \lambda \mid \mid
\]

Note that for any vector $v \in V(\bar K)$ and any 1-PS $\lambda$, we have $\nu(\lambda,v)=\nu(g \lambda
g^{-1},gv)$

\begin{lemma}(See [RR])
 There exists a constant $C$ such that for all $v \in V$ and all 1-PS $\lambda$, $\nu(\lambda,v) \leq C$.
 \end{lemma}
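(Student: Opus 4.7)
The plan is to reduce to 1-PS's in the fixed maximal torus $T$ and then bound $m(v,\lambda)$ linearly in $||\lambda||$ by exploiting the fact that the weights of $V$ form a finite set.

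First, I would use the conjugation invariance $\nu(\lambda,v)=\nu(g\lambda g^{-1},gv)$ recorded just above the lemma, together with the standard fact that every 1-PS of $G$ is conjugate into $T$, to reduce to the case $\lambda\in X_*(T)$, with $v\in V$ arbitrary. Since the norm $||\lambda||$ is defined via a $W$-invariant inner product and extended to all 1-PS's by conjugation, this reduction does not change $\nu$.

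Next, decompose $V=\bigoplus_{\chi\in\Omega}V_\chi$ into its $T$-weight spaces, where $\Omega\subset X^*(T)$ is the (finite) set of weights of $\rho$. For $\lambda\in X_*(T)$, the $t^i$-eigenspace of $\lambda$ on $V$ is $V_i=\bigoplus_{\chi:\,(\lambda,\chi)=i}V_\chi$, so
\[
m(v,\lambda)=\min\{(\lambda,\chi):\chi\in\Omega,\ v_\chi\neq 0\},
\]
where $v_\chi$ denotes the component of $v$ in $V_\chi$. In particular, for any single $\chi_0\in\Omega$ lying in the support of $v$, we have $m(v,\lambda)\leq(\lambda,\chi_0)$.

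To finish, identify $X_*(T)\otimes\Q$ with $X^*(T)\otimes\Q$ via the fixed $W$-invariant inner product $\langle\cdot,\cdot\rangle$ so that the pairing $(\cdot,\cdot)$ becomes this inner product; the Cauchy--Schwarz inequality then gives $(\lambda,\chi_0)\leq||\lambda||\cdot||\chi_0||$. Dividing by $||\lambda||$ yields
\[
\nu(\lambda,v)\leq||\chi_0||\leq\max_{\chi\in\Omega}||\chi||=:C,
\]
which depends only on $\rho$ (through the finite set $\Omega$), not on $v$ or $\lambda$. There is no real obstacle here: the only point requiring a line of care is the passage from the $\Z$-valued pairing $(\cdot,\cdot)$ to an honest inner product so that Cauchy--Schwarz is applicable, and the uniformity in $v$ coming from the finiteness of $\Omega$.
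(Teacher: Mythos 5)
Your proof is correct; the paper itself gives no argument for this lemma (it simply cites [RR]), and your route --- conjugating $\lambda$ into the maximal torus using $\nu(\lambda,v)=\nu(g\lambda g^{-1},gv)$, decomposing $V$ into the finitely many $T$-weight spaces, and bounding $m(v,\lambda)\leq(\lambda,\chi_0)\leq\|\lambda\|\,\|\chi_0\|$ by Cauchy--Schwarz --- is exactly the standard argument in that reference. The only friction is notational: the paper defines $\|\lambda\|$ as $\langle\lambda,\lambda\rangle$ rather than its square root, which is evidently a typo (boundedness would still hold with that convention, since $\langle\lambda,\lambda\rangle$ is bounded away from $0$ on nonzero elements of the cocharacter lattice), and your Cauchy--Schwarz step correctly uses the intended square-root norm.
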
 
 
\indent For a non-semistable vector $v \in V$ define its {\bf instability 1-PS} (denoted $\lambda_v$) to be
one for
which
$\nu(v,\lambda)$ attains the maximum value among all the 1-PS of $G$. Intuitively, this is the 1-PS in
$G$
which takes the vector $v$ to $0$ fastest after proper scaling.

\indent For a 1-PS $\lambda$ define a parabolic $P( \lambda)$ whose valued points consist of elements $g \in
G$
such that $\underset{t \to 0}{\lim}~ \lambda(t)g\lambda(t)^{-1}$ exists. This is known as the {\bf instability
parabolic} associated to $\lambda$. If $\lambda$ is an instability 1-PS of $v$, then $P(\lambda)$ will also
be known as the instability parabolic of $v$, denoted $P(v)$.

  Now if $G$ acts on a projective variety $M$ defined over $K$ which is linearized by some very ample line
bundle $\mathcal L$, then we
get a $G$-equivariant embedding $i: M \hookrightarrow {\mathbb {P}}(H^\circ(M,{\mathcal L}))={\mathbb
{P}}(V)$. We then
say that a point $m \in M$ is semistable for the $G$-action if the corresponding point in $V$ is semistable.

 We recall some basic facts concerning instability 1-PS (See [RR] )

Suppose $G$ acts on a projective variety $M$ as above. Let $m \in M$ be a nonsemistable point for the action
of $G$. \\

 (a) The function which sends every 1-PS $\lambda$ of $G$ to $\nu(\lambda,m)$ attains its maximum on the set
of all 1-PS subgroups of G. Following [RR], we denote this value by $B$.\\

(b) There exists a parabolic subgroup $P(m)$ of $G$, called the instability parabolic associated to the point
$m$, such that for any instability 1-PS $\lambda$ associated to $m$, we have $P(m)=P(\lambda)$.\\

(c) The instability parabolic $P$ is generated by $T$ together with the root groups $U_\alpha$ correponding to
roots $\alpha$ for which $\alpha(\lambda) \geq 0$.\\

(d) A maximal torus $T$ in $G$ contains a instability 1-PS $\lambda$ for $m$ if and only if $T \subset
P(\lambda)$.
Such a 1-PS is neccessarily unique.\\

(e) For a non-semistable $m \in M$, if $\lambda(t)$ is an instability 1-PS of $m$, then $g\lambda(t)g^{-1}$
is the instability 1-PS of $gm$ and $\nu(\lambda,m)= \nu(g\lambda g^{-1},gm)$\\

(f) For a 1-PS $\lambda$ of $G$ and any element $g \in P(\lambda)$ we have $\nu(m,\lambda)=\nu(gm,\lambda)$.\\

(g) For any $g \in G$, we have $P(gm)=gP(m)g^{-1}$.\\

(h) If $m \in M$ is an unstable point for the action of $G$ having an instability 1-PS defined over an
extension field $[L:K]$, then the instability parabolic $P(m)$ is also defined over $L$. \\

\indent Now let $K$ be an arbitrary field (not neccessarily algebraically closed). Let $K_s$ denote its
seperable closure. Let $G$ be a reductive algebraic group defined over $K$. Let $T$ be a fixed
maximal torus of $G$ (which will always be split over $K_s$, in fact over a finite extension of K). Let $M$
be a
projective variety defined over $K$ on which $G$ acts, linearized by a very ample line bundle $\mathcal L$
giving a
$G$-equivariant embedding $i:M \hookrightarrow {\mathbb{P}}(V)$. Fix a inner product on $X_*(T \otimes K_s)$
to be one which is invariant under the action of the Weyl group as well as the Galois group Gal$(K_s \mid K)$
(See [Kempf]).\\
\indent A point $m \in M$ is said to be semistable if it semistable after base change to its algebraic
closure,
i.e thought of as an element in $V (\bar{K})$.

Let $m \in M$ be a $K$-rational point of $M$. Let $P(m)$ be the instability parabolic of $m$ defined
over
$\bar{K}$. By invariance of the inner product under the Galois action and uniqueness of $P(m)$ we see that
if 
$P(m)$ is defined over $K_s$, then it is already defined over $K$. [See RR].\\
  
   {\bf Rationality of the instability parabolic and its consequences}\\
   
  Let $X$, $G$ and $\mathcal L$ be as before. Suppoe $\rho: G \rightarrow Gl(V)$ be a representation of $G$
which
takes the connected component of the centre of $G$ to the centre of $Gl(V)$. Let $P$ be a maximal parabolic of
$Gl(V)$. Choose the very ample generator $\mathcal L$ of $Gl(V)/P$. This is a linearized very ample line
bundle giving an embedding of $Gl(V)/P$ inside a projective space $\mathbb {P}(W)$.\\
\indent  Now let $\pi:E \rightarrow X$ a principal $G$ -bundle on $X$. Let $E(G)$ be the associated group
scheme over
$X$. Let $E(Gl(V)/P)$ be the associated fiber space. Let $T_\pi$ denote the relative tangent bundle on
$E(Gl(V)/P)$. Let $E(\mathcal L)$ be the associated line bundle on $E(Gl(V)/P$ corresponding the line bundle
$\mathcal L$ on $Sl(V)/P$. The group scheme $E(G)$ acts on $E(G/P)$. Let
$E(G)_\circ$ be the generic fiber of $E(G)$. It is a group scheme defined over the function field of $X$.
$E(G)_\circ$ acts on $E(Gl(V)/P)_\circ$ which is linearized by $E(\mathcal L)_\circ$. Let suppose $\sigma$ be
a
reduction of the induced $Gl(V)$-bundle to $P$. Then corresponding to this reduction we get a section of
(called $\sigma$ again) of $E(Gl(V)/P)$ over $X$. Let $\sigma_\circ$ be the associated
$k(X)$-valued point of $E(Gl(V)/P_\circ)$. Suppose $\sigma_\circ$ is a non-semistable point for the action
of $E(G)_\circ$ on $E(Gl(V)/P)$. Let $P(\sigma_\circ)$ denote the instability parabolic
associated
to the point $\sigma_\circ$. We call $P(\sigma_\circ)$ the instability parabolic corresponding to this
reduction. Let $T_\sigma$ denote the pullback of $T_\pi$ via the
section $\sigma$.

\begin{proposition} (See [RR, Proposition 3.10, (1)])
 Let $\sigma_\circ$ be a semistable point for the action of $E(G)_\circ$ on $E(G/P)_\circ$ with respect to
the polarization $E(\mathcal L)_\circ$. Then $T_\sigma$ has degree $\geq 0$.\\ 
$~$ In other words this reduction of structure group does not contradict semistability of $E(Gl(V)$.
 
\end{proposition}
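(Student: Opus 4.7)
My plan is to reduce the statement about the vector bundle $T_\sigma$ to the degree of a single line bundle $L_\sigma := \sigma^*E(\mathcal L)$, and then use the GIT semistability hypothesis on $\sigma_\circ$ to exhibit a non-zero global section of a tensor power of $L_\sigma$.

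First I would use that $P$ is a maximal parabolic of $Gl(V)$, so $\mathrm{Pic}(Gl(V)/P) = \mathbb Z$ with ample generator $\mathcal L$, and the anticanonical line bundle of $Gl(V)/P$ equals $\mathcal L^{\otimes a}$ for some positive integer $a$ (the index of the generalized Grassmannian $Gl(V)/P$). Applied to the associated fiber bundle this gives $\det T_\pi = E(\mathcal L)^{\otimes a}$, so pulling back along the section $\sigma$ yields $\det T_\sigma = L_\sigma^{\otimes a}$. Since $\deg T_\sigma = \deg \det T_\sigma = a\cdot \deg L_\sigma$, it is enough to prove $\deg L_\sigma \ge 0$.

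Next I would unpack the semistability hypothesis. Set $V_n := H^0(Gl(V)/P,\mathcal L^{\otimes n})$, a rational $Gl(V)$-module and thus a $G$-module via $\rho$. The definition of semistability for $\sigma_\circ$ says that, after base change to $\overline{k(X)}$, there exists some $n\ge 1$ and an $E(G)_\circ$-invariant section of $E(\mathcal L)_\circ^{\otimes n}$ not vanishing at $\sigma_\circ$. Because $V_n^G$ is a trivial $G$-module, the invariant-section subspace descends via the associated bundle construction and gets identified with $V_n^G \otimes_k k(X)$ over the generic point; commutation of $G$-invariants with flat base change (valid for reductive $G$) lets me find an element $\phi \in V_n^G$, defined over $k$, such that $\phi(\sigma_\circ)\neq 0$ in $L_\sigma^{\otimes n}\otimes k(X)$. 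Then $\phi$, being a $G$-invariant global section of $\mathcal L^{\otimes n}$ on $Gl(V)/P$, produces a global section of $E(\mathcal L)^{\otimes n}$ on $E(Gl(V)/P)$; pulling back by $\sigma$ yields a section $\sigma^*\phi$ of $L_\sigma^{\otimes n}$ on $X$ which is non-zero at the generic point and hence a non-zero global section. A line bundle admitting a non-zero global section has non-negative degree, so $\deg L_\sigma \ge 0$, and combined with the reduction above this gives $\deg T_\sigma \ge 0$.

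The main obstacle I anticipate is the descent step: one must check carefully that an invariant exhibiting $\overline{k(X)}$-semistability can be chosen as a $k$-rational element of $V_n^G$, rather than only as an element of $V_n^G\otimes\overline{k(X)}$. This is where reductivity of $G$ intervenes in positive characteristic (through geometric reductivity/commutation of invariants with base change), and where the triviality of $V_n^G$ as a $G$-module is crucial in erasing the Čech twist defining $E(G)$. The remaining ingredients---the Picard group and anticanonical computation for $Gl(V)/P$, and the elementary fact that a line bundle with a non-zero section has non-negative degree---are standard.
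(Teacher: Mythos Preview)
Your argument is correct and is essentially the proof given in Ramanan--Ramanathan [RR, Proposition~3.10], which is exactly what the paper cites rather than reproving; the paper offers no independent proof of this proposition.

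One clarification on the point you flag as the main obstacle. Commutation of $G$-invariants with flat base change, $(V_n\otimes_k L)^{G_L}=V_n^G\otimes_k L$, holds for \emph{any} affine group scheme of finite type over a field, since $V_n^G$ is the kernel of the $k$-linear comodule map $v\mapsto \Delta(v)-v\otimes 1$ and kernels commute with flat base change; reductivity plays no role there. Where (geometric) reductivity actually enters is one step earlier: the equivalence, in positive characteristic, between ``$0\notin\overline{Gv}$'' and ``there exists a $G$-invariant homogeneous $\phi$ with $\phi(v)\neq 0$'' is Haboush's theorem. The paper simply builds this equivalence into its definition of semistable point, so you may invoke it freely. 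Once you have any $\phi\in V_n^G\otimes_k \overline{k(X)}$ with $\phi(\sigma_\circ)\neq 0$, write $\phi=\sum c_i\psi_i$ with $c_i\in\overline{k(X)}$ and $\psi_i\in V_n^G$; some $\psi_i$ must satisfy $\psi_i(\sigma_\circ)\neq 0$, giving the $k$-rational invariant you need. The rest of your argument (the identification $\det T_\pi\simeq E(\mathcal L)^{\otimes a}$ for the Fano index $a>0$ of the Grassmannian $Gl(V)/P$, and the nonnegativity of the degree of a line bundle with a nonzero global section) is standard and matches [RR].
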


\begin{proposition}{\label{HC}} (See [RR])
 Let $E$ be a semistable $G$-bundle. Suppose for every reduction to a parabolic $P$ in $Gl(V)$, the
instability parabolic associated to this reduction is rational (defined over $k(X)$), then the induced $Gl(V)$
bundle is semistable.
\end{proposition}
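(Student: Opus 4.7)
According to the second characterization of semistability in Section~2, semistability of $E_{Gl(V)}$ is equivalent to $\deg T_\sigma \ge 0$ for every reduction $\sigma$ of $E_{Gl(V)}$ to a maximal parabolic $P \subset Gl(V)$. So I fix such a reduction $\sigma$ and split on the behaviour of its generic fibre $\sigma_\circ$. If $\sigma_\circ$ is semistable for the $E(G)_\circ$-action on $E(Gl(V)/P)_\circ$, the preceding proposition applies immediately and $\deg T_\sigma \ge 0$. It therefore suffices to treat the case in which $\sigma_\circ$ is non-semistable.

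In that case, by hypothesis, $P(\sigma_\circ) \subset E(G)_\circ$ is defined over $k(X)$. Now rationality of a parabolic subgroup scheme of $E(G)_\circ$ of a prescribed conjugacy type $P' \subset G$ is the same data as a $k(X)$-point of the generic fibre of the associated flag bundle $E(G/P') \to X$; since $G/P'$ is proper, such a $k(X)$-point extends uniquely to an $X$-section, i.e.\ to a reduction $E_{P'}$ of $E$ to $P'$ whose restriction to the generic point recovers $P(\sigma_\circ)$. I next promote the instability $1$-PS $\lambda$ itself: although $\lambda$ a priori lives over $\ov{k(X)}$, uniqueness of $\lambda$ inside a fixed maximal torus of $P(\sigma_\circ)$ (fact~(d) of Section~3), combined with Galois-invariance of the inner product on $X_*(T) \otimes \Q$, shows that $\lambda$ is Galois-stable and hence defined over $k(X)$ (after multiplication by a positive integer if needed). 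Transported through $E_{P'}$, the cocharacter $\lambda$ gives rise to a rational dominant character $\chi_\lambda$ of $P'$.

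Because $E$ is semistable, $\deg E_{P'}(\chi_\lambda) \le 0$. To conclude I run the standard $\lambda$-weight decomposition of the tangent space to $E(Gl(V)/P)_\circ$ at $\sigma_\circ$: since $\sigma_\circ$ is unstable, $\lambda$ acts with strictly positive weight on the fibre of $E(\mathcal L)_\circ^{-1}$ at $\sigma_\circ$, which forces the characters occurring in the $\lambda$-graded pieces of $T_\sigma$ to be non-negative integer combinations of simple roots. By semistability of $E$ (in the form recalled at the end of Section~2), each associated line bundle $E_{P'}(\chi)^{-1}$ appearing in the graded pieces of $T_\sigma$ has non-negative degree, and hence $\deg T_\sigma \ge 0$, as required.

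The principal obstacle is the descent of the instability $1$-PS $\lambda$ from $\ov{k(X)}$ to $k(X)$ given only the rationality of the instability parabolic, since purely inseparable phenomena in positive characteristic can interfere with Galois descent; this is precisely where the Galois-invariance of the inner product (as emphasised in [Kempf] and recalled in Section~3) and the uniqueness clause~(d) do the essential work. Once $\lambda$ is descended, the remaining slope computation is formal and proceeds essentially as in [RR, Proposition~3.10].
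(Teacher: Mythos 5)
The paper offers no proof of this proposition --- it is quoted from [RR] (it is in substance Proposition 3.10/3.13 there) --- so the only meaningful comparison is with the Ramanan--Ramanathan argument, and your plan does follow that argument: reduce to $\deg T_\sigma \ge 0$ for each reduction to a maximal parabolic, dispose of the case where $\sigma_\circ$ is semistable by the preceding proposition, and in the unstable case use rationality of $P(\sigma_\circ)$ to produce a reduction of $E$ itself to a parabolic $P'$ together with a rational instability $1$-PS and its associated dominant character, against which semistability of $E$ is played off. Your descent of $\lambda$ from $\ov{k(X)}$ to $k(X)$ (a $k(X)$-rational maximal torus inside the rational parabolic, uniqueness as in fact (d), Galois-invariance of the norm) is exactly the right mechanism, and the passage from the rational parabolic subgroup scheme of $E(G)_\circ$ to a reduction of $E$ is fine (strictly, the section extends over an open set whose complement has codimension $\ge 2$, which is all one needs for degree computations).

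The step that does not hold up as written is the final degree computation. The assertion that positivity of the $\lambda$-weight on the fibre of $E(\mathcal L)_\circ^{-1}$ at $\sigma_\circ$ ``forces the characters occurring in the $\lambda$-graded pieces of $T_\sigma$ to be non-negative integer combinations of simple roots'' is a non sequitur: the inequality $m(\sigma_\circ,\lambda)>0$ says nothing about which $P'$-characters appear in any grading of $T_\sigma$, and $T_\sigma$ is not a priori an associated bundle of the reduction $E_{P'}$ at all. Moreover, ``semistability in the form recalled at the end of Section 2'' is the characterization of the canonical (Behrend) reduction of an \emph{unstable} bundle (degree $\ge 0$ for nontrivial non-negative combinations of simple roots), not the semistability criterion (degree $\le 0$ for dominant characters); the two sign conventions are being conflated. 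What is actually needed to close the argument is the quantitative inequality of [RR, Proposition 3.10(2)] relating $\deg \sigma^* E(\mathcal L)$ to $m(\sigma_\circ,\lambda)$ and $\deg E_{P'}(\chi_\lambda)$, proved by spreading the $\lambda$-weight filtration of $V_{k(X)}$ out to a filtration of $E(V)$ by subbundles via $E_{P'}$ and computing both degrees against it. Since you ultimately defer this computation to [RR], the overall plan is the correct one, but as stated the justification of the last inequality is wrong rather than merely compressed.
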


\begin{proposition}{\label{field of definition}}   (See [HC], Proposition 4.5)
 Let $G$ be a reductive algebraic group defined over an arbitrary field $K$ (not neccessary algebraically
closed) acting on a projective variety $M$ defined over $K$. Then there exists an integer $t$
such that given any $K$-valued point
$m \in M$ which is not
semistable its instability parabolic $P(m)$ is defined over
$K^{p^{-t}}$. 
\end{proposition}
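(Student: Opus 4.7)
The plan is to combine the uniqueness of the instability parabolic, which forces $P(m)$ to be defined over the perfect closure of $K$, with a finite-type family argument that converts this pointwise statement into a bound on $t$ uniform in $m$.

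First I would establish the pointwise statement. For any $K$-rational unstable point $m$, every $\sigma \in \operatorname{Aut}(\bar K/K)$ fixes $m$, and by uniqueness of the instability parabolic (property (b) above) we have $\sigma(P(m)) = P(\sigma(m)) = P(m)$. Since a purely inseparable extension admits no nontrivial automorphisms, $\operatorname{Aut}(\bar K/K) = \operatorname{Gal}(K_s/K)$, and Galois descent shows that $P(m)$ is defined over the fixed field of this action in $\bar K$, namely the perfect closure $K^{p^{-\infty}} = \bigcup_{t\ge 0} K^{p^{-t}}$. This is the analogue of part (h) for the full algebraic closure.

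The heart of the proof is a uniformity argument. Stratify the unstable locus $U = M \setminus M^{ss}$ (a $K$-closed subscheme) according to the conjugacy type $\tau$ of the instability parabolic; there are only finitely many types, and parabolics of each type are parametrised by a projective homogeneous $K$-variety $\operatorname{Par}_\tau$. For each $\tau$ I would construct a closed $K$-subscheme
\[
\Sigma_\tau \;\subset\; U_\tau \times \operatorname{Par}_\tau
\]
whose geometric points are exactly the pairs $(m, P(m))$. The defining conditions come from the algebraic characterisation of $P(m)$ in terms of the universal pairing $\nu$ on $1$-parameter subgroups contained in the chosen parabolic (using properties (c)--(f)); this condition is intrinsic to the $G$-action on $M$ and so descends automatically to $K$. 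By uniqueness of $P(m)$, the projection $p_\tau : \Sigma_\tau \to U_\tau$ is bijective on geometric points, and since $\operatorname{Par}_\tau$ is projective, $p_\tau$ is proper; hence $p_\tau$ is finite and radicial.

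The final ingredient is a standard fact: for a finite radicial morphism $f : X \to Y$ of finite-type $K$-schemes with $\operatorname{char} K = p > 0$, there exists $t \ge 0$ such that the $t$-th relative Frobenius $F_{Y/K}^{t}$ factors through $f$; equivalently, for every $y \in Y$ the residue-field extension $k(f^{-1}(y))/k(y)$ is contained in $k(y)^{1/p^{t}}$. Applying this to each $p_\tau$ and taking the maximum of the resulting exponents over the finitely many types gives the required $t$: for any $K$-valued unstable point $m \in U_\tau(K)$ the unique lift to $\Sigma_\tau$ is a $K^{p^{-t}}$-point, which precisely says that $P(m)$ is defined over $K^{p^{-t}}$. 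The main obstacle is the scheme-theoretic construction of $\Sigma_\tau$: one must write ``$P$ is the instability parabolic of $m$'' as an algebraic condition descending to $K$. Once this is done by characterising $P(m)$, say, as the unique parabolic of type $\tau$ that maximises an explicit algebraic functional on $m$ built from the action on $\operatorname{Lie}(P)$ and its weight decomposition, the radicial-finite-morphism argument and the bounded inseparability lemma become formal.
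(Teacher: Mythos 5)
Your strategy is sound, but it follows a genuinely different route from the one the paper sketches (following Coiai--Holla). The paper works orbit by orbit: after the same Galois-descent step as your first paragraph, it conjugates $P(m)$ to a parabolic $P$ defined over $K$, forms the scheme $M(P)_{x_m}$ --- the scheme-theoretic intersection of the orbit with the linear subspace $\mathbb{P}(V^j)$ --- whose $\bar K$-points are exactly the points of the orbit with instability parabolic $P$, and then gets the uniform $t$ by bounding the \emph{radical index} (nilpotency exponent) of these affine algebras over a finite-type family of fibers, combined with the lemma that a $K_s$-algebra of radical index $\leq p^n$ has a $K_s^{p^{-n}}$-rational point; the rational point found in $M(P)_{x_m}$ is then transported back to $m$ inside the orbit. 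You instead build the global incidence correspondence $\Sigma_\tau \subset U_\tau \times \mathrm{Par}_\tau$ over each stratum of the Hesselink--Kempf stratification and invoke the fact that a finite, surjective, universally injective morphism of finite-type schemes in characteristic $p$ is dominated by a power of Frobenius. Both arguments ultimately rest on a uniform inseparability bound over a finite-type family; yours avoids the orbit map entirely and is cleaner to state, while the paper's version makes visible the role of non-reduced stabilizers (when the action is strongly separable, $M(P)_{x_m}$ is absolutely reduced and one gets $t=0$). Neither yields an explicit value of $t$, which is why the rest of the paper switches to the elementary-polynomial method.

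The one step you defer --- realizing $\Sigma_\tau$ as a closed subscheme whose geometric points are exactly the pairs $(m,P(m))$ --- is the real content, and as stated it has a gap: ``the parabolic maximising a functional'' is not by itself an algebraic condition. The workable closed condition is exactly the one underlying the paper's $M(P)_{x_m}$: for $P$ of type $\tau$ with normalized optimal one-parameter subgroup $\lambda_P$ and critical weight $j_\tau$, impose $m \in \mathbb{P}\bigl(V_{\lambda_P}^{\geq j_\tau}\bigr)$; restricted to the stratum $U_\tau$ this cuts out precisely the graph of $m \mapsto P(m)$, by Kempf's uniqueness. You should also note that it suffices to carry out the whole construction over $K_s$ (where the strata and flag varieties split), since a Galois-stable parabolic defined over $\bigl(K^{p^{-t}}\bigr)_s$ descends to $K^{p^{-t}}$ by the uniqueness argument already in place. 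With those two substitutions your argument closes.
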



{\bf The method of Holla-Coiai}

  In this section we briefly explain the method of Holla-Coiai for proving the existence of the integer $t$
in proposition \ref{field of definition}.
We will be brief and sketchy in this exposition.  Let $G$ and $M$ be as in above proposition \ref{field of
definition}. Let $\mathcal L$ be a linearized very ample line bundle on $M$ giving a $G$-equivariant embedding
$i: M \hookrightarrow {\mathbb {P}}(H^\circ(M,{\mathcal L}))={\mathbb
{P}}(V)$  \\
For an affine algebra $A$ over $K$, we define its radical index to be the smallest integer $n$, such that
$f^n= 0$ for all $f \in$ Rad$({\bar A}) \overset{\text{by~defn}}{=}$ Rad $(A \otimes_{K} {\bar K})$.
Now let $m \in M$ be a $K$-rational point of $M$ which is not semistable for the $G$-action.\\
\indent  Recall that the the action of $G$ is said to be strongly seperable at a point $m \in M$ if the
isotropy
subgroup scheme at every ${\bar K}$-valued point in the closure of $O(m)$ is reduced, where $O(m)$ denotes the
orbit of $m$. Let $P(m)$ be the instability parabolic of $m$. There exists $g \in G$ such that the parabolic
$P = gP(m)g^{-1}$ is defined over $K_s$.  By uniqueness of the instability parabolic and
Galois descent, it is already defined over $K$. Let
$x_m=gm$. Then the instability parabolic of $x_m$ is $P$. Since $P(x_m)$ is defined over $K$, it contains a
maximal torus over $K$ (which is split over $K_s$). Hence there is a unique instability 1-PS of $x_m$
contained in this maximal torus which is defined over $K_s$ and hence by uniqueness defined over $K$. \\
\indent Consider the decomposition of $V = \oplus V_i$ into simultaneous eigenspaces for the action of
$\lambda$, where $V_i=\{v \in V \mid \lambda(t)(v)=t^i(v)\}$.
Let $j=m(x_m,\lambda)$ and $V^j=\oplus V_i, i\geq j$.
 Define the $K_s$-scheme $M(P)_{x_m}$ to be the scheme theoretic intersection of the $K_s$-subscheme
${\mathbb P}(V^j)$
and $O(m)$ of ${\mathbb P}(V)$. 
  The following proposition summarizes the basic properties of the scheme $M(P)_{x_m}$.
  
\begin{proposition}{\label{strongly seperable}}
 The $\bar K$-valued points of $M(P)_{x_m}$ are precisely those points in the $K$-scheme $O(m)$ for which the
instability parabolic is $P(x_m)$. Also, when the action of $G$ on $m$ is strongly seperable, then
$M(P)_{x_m}$ is absolutely reduced. 
\end{proposition}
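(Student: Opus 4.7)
I would treat the two assertions separately.

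\emph{Identification of the $\bar K$-points.} The natural tool is the Kempf slope $\nu(\lambda,\cdot)$ together with properties (e)--(g). In the forward direction, if $y\in M(P)_{x_m}(\bar K)$ and $\hat y\in V^j$ is a lift, then $m(y,\lambda)\ge j$ and hence $\nu(\lambda,y)\ge j/\|\lambda\|=\nu(\lambda,x_m)=B$. By (e) the maximum $B$ of the slope is invariant on the $G$-orbit, so $B$ is also the maximum for $y$; equality must therefore hold, $\lambda$ is an instability 1-PS for $y$, and hence $P(y)=P(\lambda)=P(x_m)$. Conversely, if $y=h\,x_m\in O(m)(\bar K)$ has $P(y)=P(x_m)$, then (g) gives $hPh^{-1}=P$, so $h\in N_G(P)=P(\bar K)$ by self-normalization of parabolics; applying (f), $\nu(\lambda,y)=\nu(\lambda,x_m)=B$, whence $m(y,\lambda)=j$ and $\hat y\in V^j$.

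\emph{Reducedness under strong separability.} Since $P=P(\lambda)$ preserves the filtration $V^\bullet$, it acts on ${\mathbb P}(V^j)$ and on $M(P)_{x_m}$, and by the first part this action is transitive on $\bar K$-points; so $M(P)_{x_m}$ coincides set-theoretically with the $P$-orbit of $x_m$. The remaining task is to upgrade this to a scheme-theoretic identification. Under strong separability $\mathrm{Stab}_G(x_m)$ is smooth, so $O(m)=G/\mathrm{Stab}_G(x_m)$ is smooth, and one computes
\[
T_{x_m}M(P)_{x_m}=T_{x_m}{\mathbb P}(V^j)\cap T_{x_m}O(m)=\{X\cdot \hat x_m : X\in{\mathfrak g},\ X\cdot \hat x_m\in V^j\}/k\hat x_m.
\]
If this equals ${\mathfrak p}\cdot \hat x_m/k\hat x_m=T_{x_m}(P\cdot x_m)$, then $M(P)_{x_m}$ has tangent space of the expected dimension at $x_m$; by $P$-equivariance it is smooth (hence reduced) everywhere, and coincides scheme-theoretically with $P\cdot x_m\cong P/\mathrm{Stab}_P(x_m)$.

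\emph{Main obstacle.} The crux is the tangent-space equality just stated, i.e.\ showing that every $X\in{\mathfrak g}$ with $X\cdot \hat x_m\in V^j$ satisfies $X\cdot \hat x_m\equiv Y\cdot\hat x_m\pmod{k\hat x_m}$ for some $Y\in{\mathfrak p}$. Decomposing ${\mathfrak g}=\bigoplus_w {\mathfrak g}_w$ under $\lambda$ with ${\mathfrak p}=\bigoplus_{w\ge 0}{\mathfrak g}_w$, the condition $X\cdot \hat x_m\in V^j$ yields an explicit system of linear relations on the negative-weight components of $X$, and the role of strong separability -- the smoothness of stabilizers along the orbit closure -- is to control the dimension of ${\mathfrak g}\cdot \hat x_m\subset V/k\hat x_m$ sufficiently to force the negative-weight contribution of $X$ to lie in $k\hat x_m$. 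Equivalently, strong separability ensures that the orbit morphism $P\to M(P)_{x_m}$, $p\mapsto p\cdot x_m$, is smooth, so that the set-theoretic identification promotes to a scheme-theoretic one.
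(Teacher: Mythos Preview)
The paper does not actually prove this proposition: it appears in the expository subsection ``The method of Holla--Coiai'' as a summary of results from [CH], and the text resumes immediately after the statement with ``Suppose one can find a $K_s$ rational point\ldots''. So there is no proof in the paper to compare yours against; what follows is an assessment of your argument on its own merits.

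Your identification of the $\bar K$-points is correct and cleanly executed: properties (e), (f), (g) together with self-normalisation of parabolics give both directions.

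The reducedness argument, however, has a genuine gap that you yourself flag as the ``Main obstacle''. You reduce correctly to the tangent-space equality $T_{x_m}M(P)_{x_m}=T_{x_m}(P\cdot x_m)$, but you do not prove it; the final paragraph is a description of what one hopes is true rather than a demonstration. Two specific issues:
\begin{itemize}
\item You invoke strong separability only at $x_m$ (smoothness of $\mathrm{Stab}_G(x_m)$), but the hypothesis is that stabilizers are reduced at every point of the orbit \emph{closure}. The standard arguments (going back to Kempf and Ramanan--Ramanathan) use this at the limit point $x_0=\lim_{t\to 0}\lambda(t)\cdot x_m$, which lies in $\overline{O(m)}\setminus O(m)$ in general and whose stabilizer contains $\lambda$. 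Smoothness of $\mathrm{Stab}_G(x_m)$ alone does not obviously control the negative-weight contributions $X_-\cdot\hat x_m$.
\item Even granting the tangent-space equality at $x_m$, you also need $\mathrm{Stab}_P(x_m)=P\cap\mathrm{Stab}_G(x_m)$ to be reduced in order to conclude that $P\cdot x_m\cong P/\mathrm{Stab}_P(x_m)$ is smooth of the expected dimension; reducedness of $\mathrm{Stab}_G(x_m)$ does not by itself imply reducedness of its intersection with $P$.
\end{itemize}
To close the gap you should either carry out the weight computation in full (showing directly that $X_-\cdot\hat x_m\in V^j$ forces $X_-\cdot\hat x_m\in\mathfrak p\cdot\hat x_m+k\hat x_m$, which is where the limit point and its reduced stabilizer enter), or follow the route in [CH], to which the paper defers.
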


Suppose one can find a $K_s$ rational point $m'$ in $M(P)_{x_m}$, then by proposition { \ref{strongly
seperable}}, its
instability parabolic being $P(x_m)$ is hence defined over $K_s$. Since $m$ and $m'$ are both $K_s$ rational
points,
they are translates of each other by a $G(K_s)$-valued point $g$ and hence their instability parabolic are
conjugates by $g$. This will prove that the instability parabolic for $m$ is defined over $K_s$ and hence by
uniqueness and Galois descent it is defined over $K$.\\
  Thus the problem of showing the existence of the integer $t$ in proposition {\ref{field of definition}}
boils down to finding a finite purely inseperable extension $L$ of $K_s$ (independent of the
point $m$) over which the scheme $M(P)_{x_m}$ will have a $L$-valued point. This bound is obtained
using the following lemma's:

\begin{lemma}
 Let $f: Y\rightarrow X$ be a morphism of finite-type scheme over $\bar K$. Then there exists an integer $n$ 
 such that the radical index of the schematic fiber of $x$ is less than or equal to $n$ for all closed points
$x \in X$. 
\end{lemma}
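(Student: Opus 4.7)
The plan is to proceed by noetherian induction on $X$: it suffices to exhibit a dense open $U \subseteq X$ and an integer $n_U$ such that every closed point $x \in U$ has fiber $Y_x$ of radical index at most $n_U$; the induction hypothesis applied to $f^{-1}(X \setminus U) \to X \setminus U$ then concludes. First I would reduce to the affine case by taking finite affine covers of $X$ and of $Y$; since a section of $\mathcal{O}_{Y_x}$ is nilpotent iff its restriction to each member of a finite affine cover is nilpotent, with global nilpotency index bounded by the maximum of the local ones, the problem decomposes into finitely many affine-to-affine pieces and we may combine their bounds. Hence assume $X = \Spec A$ and $Y = \Spec B$, with $A$ a finitely generated $\bar{K}$-algebra and $B$ a finitely generated $A$-algebra; passing to an irreducible component of $X_{\mathrm{red}}$ (the other components are handled by the noetherian induction), I may assume $A$ is an integral domain with fraction field $L$.

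Next I would analyze the geometric generic fiber $B \otimes_A \bar{L}$, which is a noetherian $\bar{L}$-algebra, and let $n_0$ be the nilpotency index of its nilradical. Choose generators $\bar b_1, \ldots, \bar b_m$ of this nilradical; they are defined over some finite extension $L' \supseteq L$. Let $A'$ be the integral closure of $A$ in $L'$, which is finite over $A$ because $A$ is excellent, and set $B' = B \otimes_A A'$. After inverting a suitable $g \in A' \setminus 0$ one obtains lifts $b_1, \ldots, b_m \in B'_g$ with $(b_1, \ldots, b_m)^{n_0} = 0$. The generic fiber of $B'_g / (b_1, \ldots, b_m) \to \Spec A'_g$ becomes reduced after base change to $\bar{L}$ by the choice of the $\bar b_i$, so generic geometric reducedness (valid for excellent rings) produces $g' \in A'_g \setminus 0$ such that every geometric fiber of $B'_{gg'} / (b_1, \ldots, b_m) \to \Spec A'_{gg'}$ is reduced. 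For a closed point $x' \in \Spec A'_{gg'}$, whose residue field is $k = \bar K$, the nilradical of the fiber $B \otimes_A \kappa(x')$ therefore coincides with the image of $(b_1, \ldots, b_m)$ and has nilpotency index at most $n_0$. Since $\Spec A'_{gg'} \to \Spec A$ is a dominant finite-type morphism, its image contains a dense open $U \subseteq \Spec A$; any closed $x \in U$ lifts to some $x' \in \Spec A'_{gg'}$ (same residue field $k$), and $Y_x$ has nilpotency index bounded by $n_0$, supplying the required uniform bound.

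I expect the main obstacle to be the spreading-out step. The nilradical of $B \otimes_A L$ can be strictly smaller than that of $B \otimes_A \bar{L}$ in positive characteristic, so one cannot avoid the finite extension $L'/L$, and working only with reducedness of the quotient $B'_g/(b_1,\ldots,b_m)$ (rather than geometric reducedness of its fibers) would not control the fibers' nilradicals at closed points. Invoking generic geometric reducedness --- a nontrivial consequence of excellence, which is what the finite-type-over-$\bar K$ hypothesis supplies --- is what makes the spreading argument go through, after which the noetherian induction closes the proof.
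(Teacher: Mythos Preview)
The paper does not actually supply a proof of this lemma: it is stated, together with the companion lemma on rational points of $K_s$-algebras with bounded radical index, as part of the expository summary of the Coiai--Holla method, with the implicit reference being [CH]. So there is no ``paper's proof'' to compare against.

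Your argument is correct and follows the standard spreading-out strategy one would expect. The reduction to the affine, integral base via noetherian induction is routine; the heart of the matter is exactly the step you isolate: over a non-perfect function field $L$ the nilradical of $B\otimes_A L$ can be strictly smaller than that of $B\otimes_A\bar L$, so one must pass to a finite extension $L'/L$ over which generators $\bar b_1,\dots,\bar b_m$ of the geometric nilradical are defined, spread them to some $B'_g$, and then invoke constructibility of the locus of geometrically reduced fibers (EGA~IV, 9.7.7) for the family $B'_g/(b_1,\dots,b_m)\to \Spec A'_g$. Since this locus contains the generic point it contains a dense open, and over that open every closed fiber has nilradical equal to the image of $(b_1,\dots,b_m)$, whose $n_0$-th power vanishes. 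The descent from $\Spec A'_{gg'}$ back to a dense open of $\Spec A$ via Chevalley, and the identification of closed-point fibers using $\kappa(x')=\kappa(x)=\bar K$, are handled correctly. Two small remarks: the finiteness of $A'$ over $A$ that you use follows already from $A$ being a finitely generated domain over a field (Japanese), so ``excellence'' is more than you need; and the paper's radical index is the elementwise nilpotency bound, which is implied by (and in general weaker than) the ideal-theoretic bound $(b_1,\dots,b_m)^{n_0}=0$ you establish, so your conclusion is slightly stronger than required.
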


\begin{lemma}
 Let $A$ be an affine $K_s$-algebra with radical index $\leq p^n$. Then $A$ admits a $K_s^{p^{-n}}$-rational
point. 
\end{lemma}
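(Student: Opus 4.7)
The plan is to construct the required $K_s^{p^{-n}}$-rational point by descending to the subring of $p^n$-th powers and then inverting Frobenius. Set $B := K_s \cdot A^{p^n} \subseteq A$, the $K_s$-subalgebra generated by the $p^n$-th powers $\{a^{p^n} : a \in A\}$; since $A$ is finitely generated over $K_s$, so is $B$. Base-changing to $\bar K$, the algebra $\bar B := B \otimes_{K_s} \bar K$ injects into $\bar A$ by flatness, and I claim it equals $\bar A^{p^n}$, the image of the iterated Frobenius $F^n \colon \bar A \to \bar A$. The inclusion $\bar A^{p^n} \subseteq \bar B$ follows from the Freshman's dream $(\sum a_i \otimes c_i)^{p^n} = \sum a_i^{p^n} \otimes c_i^{p^n}$, while the reverse inclusion uses that $\bar K$ is perfect: any $c \cdot a^{p^n}$ with $c \in \bar K$, $a \in A$ equals $(c^{1/p^n} a)^{p^n}$, hence lies in $\bar A^{p^n}$.

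The heart of the argument is to show that $\bar A^{p^n}$ is reduced. If $x \in \bar A$ and $x^{p^n}$ is nilpotent, then $x$ itself is nilpotent, so $x \in \mathrm{Rad}(\bar A)$, and the radical-index hypothesis forces $x^{p^n} = 0$. Therefore $\bar B = \bar A^{p^n}$ has no nonzero nilpotents, i.e.\ $B$ is geometrically reduced over $K_s$. Since $K_s$ is separably closed, any nonzero finitely generated geometrically reduced $K_s$-algebra admits a $K_s$-rational point: the smooth locus of $\Spec(B)$ over $K_s$ is a nonempty open subset, and at any closed point of this locus the residue field is finite and separable over $K_s$, hence equal to $K_s$. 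This produces a $K_s$-algebra homomorphism $\phi \colon B \to K_s$.

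Finally, define $\tilde\phi \colon A \to K_s^{p^{-n}}$ by $\tilde\phi(a) := \phi(a^{p^n})^{1/p^n}$, where the unique $p^n$-th root is taken in $K_s^{p^{-n}}$ (the Frobenius $F^n \colon K_s^{p^{-n}} \to K_s$ being a ring isomorphism). Since $a^{p^n} \in A^{p^n} \subseteq B$ this is well defined; additivity and multiplicativity of $\tilde\phi$ follow from the Freshman's dream together with the ring-homomorphism property of $(F^n)^{-1}$, and $\tilde\phi(c) = (c^{p^n})^{1/p^n} = c$ for $c \in K_s$. Hence $\tilde\phi$ is a $K_s$-algebra homomorphism with image in $K_s^{p^{-n}}$, as required.

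The main obstacle is verifying the reducedness of $\bar A^{p^n}$, which is precisely where the hypothesis on the radical index is used; without it $B$ could itself be a nontrivial purely inseparable extension of $K_s$ and thus carry no $K_s$-rational point at all.
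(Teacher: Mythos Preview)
The paper does not give its own proof of this lemma; it is merely quoted, together with the preceding lemma, as one of the two technical inputs from Coiai--Holla [CH] while sketching their method. So there is nothing in the paper to compare against directly, and your argument should be judged on its own merits.

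Your strategy is sound and the proof is essentially correct, but one step is misstated. When you produce a $K_s$-rational point of $B$, you claim that ``at any closed point of the smooth locus the residue field is finite and separable over $K_s$.'' This is false in positive characteristic: already in $\mathbb{A}^1_{K_s}=\Spec K_s[t]$, which is everywhere smooth, the closed point given by $t^p-a$ with $a\in K_s\setminus K_s^p$ has residue field $K_s(a^{1/p})$, a nontrivial purely inseparable extension. What \emph{is} true, and suffices for your purposes, is that a nonempty smooth scheme of finite type over a separably closed field always has a rational point: pick an \'etale morphism from an open subset $U$ of the smooth locus to $\mathbb{A}^d_{K_s}$; its image is open and therefore contains some $K_s$-point $P$ (since $K_s^d$ is Zariski dense in $\mathbb{A}^d_{K_s}$), and the fibre over $P$ is finite \'etale over $K_s$, hence a finite product of copies of $\Spec K_s$. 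With this correction the remaining steps --- the identification $\bar B=\bar A^{p^n}$, the reducedness of $\bar A^{p^n}$ via the radical-index hypothesis, and the Frobenius-inversion construction of $\tilde\phi$ --- are all fine.
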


\section{Bounds for the field of definition of the instability parabolic and its
consequences}{\label{Bounds for the
instability parabolic}}
  In this section we give explicit bounds for the field of definition of the instability parabolic associated
to non-semistable points for the action of a reductive algebraic group $G$ acting on a vector space
$V$ defined over an
arbitrary field $K$. We do this by giving explicit bounds for the field of definition for the instability 1-PS
associated to these points. We first do this $G=Sl(2)$, where we can
 get much better bounds than for a general $G$, then for the tensor power
representation of $Sl(n)$, then for an arbitrary representation of $Sl(n)$ and then for an
arbitrary
representation of any arbitrary reductive algebraic group $G$.\\ 
$~$ We now begin with giving bounds for the field of definition of the instability parabolic for various
$Sl(2)$-modules. 

\begin{lemma}{\label{Sl(2)}}
 Let $K$ be any field (not neccessarily algebraically closed) if char $p>0$. Let $G=Sl(2,K)$. Let
$\rho:Sl(2,K) \rightarrow
 S^N(V)$ be the standard symmetric power representation. Let $N=N_0 + N_1p + N_2p^2 + \cdots + N_tp^t$ be the
p-adic expansion of $N$. Then for any non-semistable $K$-rational vector $v \in S^N(V)$, the instability
parabolic $P(v)$ of
$v$ is defined
over $K^{1/p^t}$. 
\end{lemma}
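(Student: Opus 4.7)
The plan is to exploit the classical Hilbert--Mumford picture for binary forms and then to bound the field of definition of a ``maximal multiplicity'' root by an elementary argument. Via the $Sl(2)$-equivariant isomorphism $V \cong V^*$, identify $v \in S^N(V)$ with a degree-$N$ binary form $f(x,y)$, viewed as a section of $\OO_{\mathbb{P}^1}(N)$ on $\mathbb{P}^1 = \mathbb{P}(V)$. The zero divisor of $f$ on $\mathbb{P}^1(\bar K)$ is then intrinsic to $v$.

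The first step is to identify the instability 1-PS with the root of maximum multiplicity of $f$. Given a line $\ell \subset V$, in a basis $(e_1, e_2)$ with $e_1 \in \ell$, the 1-PS $\lambda_\ell = \mathrm{diag}(t, t^{-1})$ acts on $e_1^{N-i}e_2^i$ with weight $N - 2i$, and a direct computation gives $m(v, \lambda_\ell) = 2 r_\ell - N$, where $r_\ell$ is the vanishing order of $f$ at the point $[\ell] \in \mathbb{P}^1$. Since $\|\lambda_\ell\|$ is a constant independent of $\ell$, $\nu(v, \lambda_\ell)$ is maximized exactly when $r_\ell$ is maximized. Hence $v$ is non-semistable iff some root of $f$ has multiplicity $r > N/2$, in which case such a root is unique (two roots of multiplicity $>N/2$ would contribute more than $N$ to the degree), and the instability parabolic $P(v)$ is the Borel stabilizing the line $\ell \subset V$ corresponding to this root.

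The second step bounds the field of definition of this unique root. Write it as $[\alpha : \beta] \in \mathbb{P}^1(\bar K)$; the case $\beta = 0$ gives a $K$-rational point, so assume $\beta = 1$ and work with $\alpha \in \bar K$. Uniqueness forces every $K$-automorphism of $\bar K$ to fix $\alpha$ (automorphisms permute roots of $f$ preserving multiplicities), whence $\alpha \in K^{\mathrm{perf}}$. Let $s \geq 0$ be minimal with $\alpha^{p^s} \in K$. The minimal polynomial of such a purely inseparable $\alpha$ is $T^{p^s} - \alpha^{p^s}$, of degree exactly $p^s$, and it divides $f(T, 1) \in K[T]$ since $f(\alpha, 1) = 0$. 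This forces $p^s \leq \deg f(T,1) \leq N$. Combined with $N < p^{t+1}$ (which follows from $N_t \neq 0$), we get $s \leq t$, so $\alpha \in K^{1/p^s} \subseteq K^{1/p^t}$, and therefore $P(v)$ is defined over $K^{1/p^t}$.

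The main obstacle is the Hilbert--Mumford identification in step one, which requires careful bookkeeping of the $Sl(2)$-action on $S^N(V)$ but is standard. The arithmetic content of step two is the elementary but crucial fact that purely inseparable minimal polynomials have $p$-power degree, which is what converts the trivial degree bound on the minimal polynomial of $\alpha$ into the sharp inseparability bound dictated by the $p$-adic expansion of $N$.
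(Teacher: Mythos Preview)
Your argument is correct and morally the same as the paper's, but the execution differs in two places worth noting. To identify $P(v)$ with the Borel fixing the unique root of multiplicity $>N/2$, the paper conjugates an instability 1-PS into the diagonal torus over $\bar K$, reads off the multiplicity bound from the factor $X^T$ in $g\cdot f$, and then (its Claim~2) checks by hand, after an explicit change of basis, that the diagonal 1-PS is optimal; you parametrize indivisible 1-PS by lines $\ell\subset V$ and compute $m(v,\lambda_\ell)=2r_\ell-N$ uniformly, which is cleaner. For the field of definition of that root, the paper first passes to $K=K_s$ and factors $f$ into purely inseparable irreducibles $(X^{p^{t_i}}-\alpha_i Y^{p^{t_i}})$, so that the relevant root visibly lies in $K^{1/p^{t_1}}$ with $p^{t_1}\le N$; you instead observe that Galois invariance of the unique high-multiplicity root forces it into $K^{\mathrm{perf}}$, and then bound the inseparable degree by noting that the minimal polynomial $T^{p^s}-\alpha^{p^s}$ divides $f(T,1)$. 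Your route avoids the reduction to $K_s$ and the explicit factorization and is more conceptual; the paper's explicit factorization is more concrete and foreshadows the ``elementary polynomials'' device used later for $SL(n)$.
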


\begin{proof}
 By uniquenes of instability parabolic and Galois descent explained before, we can assume that $K$ is
seperably closed. Let
$X,Y$ denote the basis for $V$ over $K$. Thus $S^N(V)$ can be identified with the vector space of all  
degree $N$ homogeneous
polynomials in $X$ and $Y$. Let $f=\underset {i+j=N}{\sum} a_{ij}X^iY^j, a_{ij} \in K$ be an unstable vector
in $S^n(V)$ for the action of
$G$. \\

Claim 1: $f$ has a zero of multiplicity greater than $N/2$ on $\mathbb{P}^1_{\bar K}$.\\
\indent Proof of claim: Let $\lambda(t)$ be the
instability 1-PS of $f$ defined over $\bar K$. Every 1-PS of $Sl(2)$ is conjugate over $\bar K$ to the 1-PS
\[
\mu(t)= \begin{pmatrix}
         t & 0 \\
         0 & t^{-1}
        \end{pmatrix}
\]
 Choose $g \in Sl(2, \bar K)$ such that $g\lambda(t)g^{-1}$ is of the form $\mu(t)$.
 Then $\mu(t)$ is the instability 1-PS of $g \cdot f$. Let suppose $g \cdot f$
have the form:
\[
 g\cdot f=X^Tg
\]
for some nonnegative integer $T$ and some polynomial $g \in S^N(V)$ which is not divisible by $X$. Since
$\mu(t)$
drives $g \cdot f$ to $0$, $T$ neccessarily satisfies $N/2 < T \leq N$. i.e. $f$ has a zero of multiplicity
greater than $N/2$ on $\mathbb{P}^1_{\bar K}$ and hence a unique such zero.
 \\

$~$Now, by using the fact that $K$ is seperably closed, by a suitable change of basis made over $K$, we can
assume that $f$ can be factorized in the form:
\[
 f= F_1 \cdot F_2 \cdots F_r
\]
for some $0 \leq r \leq N$, with deg $F_1 \geq$ deg $F_2 \geq \cdots \geq$ deg $F_r$
and each $F_i$ of the form $(X^{p^{t_i}}- \alpha_i Y^{p^{t_i}})$ for some non-negative integer $t_i$.\\
Note that ${t_1 \geq t_2 \geq ... \geq t_r \geq 0}$.  Factorizing $f$ into
product of linear polynomials over the field $K^{1/p^{t_1}}$, we get :
\[
 f= (X - \alpha_1^{1/p^{t_1}}Y)^{p^{t_1}} \cdots (X - \alpha_r^{1/p^{t_r}}Y)^ {p^{t_r}}
\]
Note that by Claim 1, $p^{t_1} = T$. By once again making a change of basis over the field $K^{1/p^{t_1}}$,
sending 
\[
 (X - \alpha_1^{1/p^{t_1}}Y) \rightarrow X'
\]
\[
Y \rightarrow Y' 
\]
 
and calling the resulting polynomial $f'$ (which is a translate of $f$ by an element in
$Sl(2,K^{1/p^{t_1}})$), we
see that $f'$ has the form 
\[
 f'= X'^{p^{t_1}}(X'-\beta_1 Y')^{p^{t_2}} \cdots (X'-\beta_r Y')^{p^{t_r}}
\]
with all the $\beta_i$'s distinct. Note that $\beta_1,...,\beta_r$ belong to $K^{1/p^{t_1}}$.
Since $f$ has a unique root of multiplicity $> N/2$, we see that the factor occuring in the above
factorization with
the highest power is neccessarily unique. i.e. $t_1$ is unique.\\

$~$ Claim 2: The 1-PS $\mu(t)$ is an instability 1-PS of $f'$.\\
Proof of claim 2 : The proof of the claim is quite obvious. We only sketch it briefly. Note that $\nu(f',\mu)$
= $t_1/(\mid\mid \mu \mid\mid)$. Suppose there exists another 1-PS $\mu'(t)$ such that $\nu(f',\mu') >
\nu(f',\mu)$. Since all 1-PS's of $G$ are conjugates over ${\bar K}$, there exists an element $h \in G(\bar
{K})$ which conjugates $\mu$ into $\mu'$. Then $\mu(t)$ will be the instability 1-PS of $hf'$. It is easy to
see that the
highest power of $X'$ occuring in $f'$ is greater than or equal to the highest power of $X'$ occuring in
$hf'$. Hence we see
that $m(f',\mu) \geq m(hf',\mu) = m(f',\mu')$. Since $\mu$ and $\mu'$ are conjugates over ${\bar K}$, we see
that
this implies that $\nu(f',\mu) \geq \nu(f',\mu')$. This proves that $\mu$ is an instability 1-PS of $f'$ and
hence completes the proof of Claim 2. 

Now since $f$ and $f'$ are translates of each other by an element in $K^{1/p^{t_1}}$ and an instability 1-PS
of $f'$ is defined over $K$, we see that an instability 1-PS and hence the instability parabolic of $f$ is
defined over
$K^{1/p^{t_1}}$. 

\end{proof}

\begin{corollary}
 Let $\rho : G \rightarrow S^N(V)$ be the representation as in lemma {\ref{Sl(2)}}. If $N > p$, the
instability
parabolic of any non-semistable vector in $S^N(V)$ is rational. 
\end{corollary}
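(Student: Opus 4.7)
The plan is to deduce the corollary directly from Lemma~\ref{Sl(2)}. First, by Galois descent together with the uniqueness of the instability parabolic (as recalled in Section~\ref{The instability parabolic}), it suffices to establish rationality over the separable closure $K_s$ of $K$; so I may assume $K$ is separably closed throughout.

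With that reduction in place, the lemma asserts that for any non-semistable $v \in S^N(V)$, the instability parabolic $P(v)$ is defined over $K^{1/p^t}$, where $t$ is the largest index with $N_t \neq 0$ in the $p$-adic expansion $N = N_0 + N_1 p + \cdots + N_t p^t$. To conclude the corollary I must then argue that, in the regime governed by the hypothesis of the corollary, this field of definition collapses to $K$ itself; equivalently, that the relevant power $p^{t_1}$ arising inside the lemma's proof (the inseparable part of the multiplicity $T > N/2$ of the unique dominant geometric zero of the unstable polynomial) equals $1$, so that the dominant zero is already separable and therefore $K$-rational.

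The route I would take is to revisit the factorization step from the lemma's proof: write the unstable $f$ over $K$ as a product $\prod (X^{p^{t_i}} - \alpha_i Y^{p^{t_i}})^{e_i}$, identify the index $i = 1$ corresponding to the unique dominant geometric zero (so $e_1 p^{t_1} = T > N/2$), and translate the hypothesis on $N$ relative to $p$ into a constraint on $p^{t_1}$ that forces $t_1 = 0$. Once $t_1 = 0$ is established, the conjugating element moving $f$ into the normal form of the lemma's proof lies in $Sl(2, K)$ itself, and the instability 1-PS together with its parabolic descend to $K$.

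The main obstacle is precisely this last sharpening: the lemma's uniform bound $t_1 \leq t$ is too coarse for the corollary as stated, and one needs to exploit the corollary's hypothesis to show that the only way the multiplicity $T > N/2$ of the dominant zero can fit inside $N$ alongside the remaining factors is with $p^{t_1} = 1$. This is the delicate arithmetic step where the hypothesis $N > p$ must be played against the combinatorics of the factorization, and where most of the real work of the proof lies.
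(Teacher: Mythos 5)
Your proposal does not actually contain a proof: all of the content is deferred to the final ``delicate arithmetic step'' of showing that the hypothesis forces $p^{t_1}=1$, and that step is never carried out. Worse, it cannot be carried out for the hypothesis as literally stated. Take $K$ separably closed but imperfect, $\alpha\in K\setminus K^p$, and for $p<N<2p$ set $f=(X^p-\alpha Y^p)\,Y^{N-p}$. Then $f$ has a unique zero of multiplicity $p>N/2$ at $[\alpha^{1/p}:1]$, so it is unstable, and its instability parabolic is the Borel stabilizing that point, which is defined over $K^{1/p}$ but not over $K$. (Analogous examples exist for every $N\geq p$, using a suitable power of $X^p-\alpha Y^p$ times a factor with rational roots.) So under the printed hypothesis $N>p$ the conclusion is false, and no amount of playing the hypothesis against the combinatorics of the factorization will force $t_1=0$.

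What is actually going on is that the hypothesis is a misprint for $N<p$. With $N<p$ the $p$-adic expansion of $N$ is $N=N_0$, i.e.\ $t=0$ in Lemma~\ref{Sl(2)}, and that lemma already asserts the instability parabolic is defined over $K^{1/p^0}=K$; equivalently, every irreducible factor $X^{p^{t_i}}-\alpha_i Y^{p^{t_i}}$ of an unstable $f$ has degree $p^{t_i}\leq N<p$, hence $t_i=0$, so the dominant zero is $K$-rational and so is the Borel stabilizing it. That one-line deduction is the paper's ``Obvious'' proof; no further sharpening of the lemma is required. Your observation that the lemma's bound ``is too coarse for the corollary as stated'' was the right alarm bell --- the correct response to it is to repair the hypothesis, not to postulate an arithmetic argument that does not exist.
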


\begin{proof}
Obvious.
\end{proof}

In general, for an arbitrary representation of $Sl(V)$, the method does not seem to work. This is because it
is in general impossible to determine all the non-semistable points in the representing space. Hence we have
adopt a more indirect way of bounding the field of definition of the instability 1-PS which does not use the
knowledge of all the non-semistable vectors. We begin with a lemma which will be a crutial step in the
bounding of the field of definition of the instability 1-PS :

\begin{lemma}{\label{Noether normalization}}
 Let $K$ be an infinite field. Let $A=K[Y_1,...Y_n]/(f_1,...,f_r)$ be a finitely generated $K$-algebra. Let
$g \in K[Y_1,...Y_n]$. Let suppose deg $f_i=d_i$. Let $d= \prod
d_i$. Let suppose $X$= Spec $A$ thought of as a closed subscheme of $\mathbb {A}^n_K$ has a $\bar K$-valued
point at which $g$ is non-vanishing (thought of as a regular function on $X$). Then there exists an
extension field $L$ of $K$ with deg $[L:K] \leq d$ such that $X$ has a $L$-valued point at which $g$ is
non-vanishing. 
\end{lemma}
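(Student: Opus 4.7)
The plan is to combine a Noether normalization over the infinite field $K$ with a Bezout-type degree inequality, producing a finite surjective $K$-morphism from an irreducible component of $X\cap D(g)$ onto an affine space, of degree at most $d$; a generic $K$-rational fiber will then supply the required $L$-point.

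First I would replace $X$ by the open subscheme $U = X\cap D(g)\subset \mathbb{A}^n_K$, which is nonempty since it has a $\bar K$-valued point by hypothesis. Pick an irreducible component $U_0$ of $U$, set $m=\dim U_0$, and let $Y$ be the Zariski closure of $U_0$ in $\mathbb{A}^n_K$. Then $Y$ is an $m$-dimensional irreducible closed $K$-subscheme of $X$, and $U_0$ is a nonempty open subset of $Y$.

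Next I would invoke Bezout's inequality in the form of Heintz: the sum of the degrees of the irreducible components of the projective closure of $X$ in $\mathbb{P}^n_K$ is at most $\prod_i d_i = d$; in particular $\deg Y\le d$. Because $K$ is infinite, a sufficiently generic $K$-linear projection $\pi\colon \mathbb{A}^n_K\to\mathbb{A}^m_K$ restricts, after a linear change of coordinates defined over $K$, to a finite surjective morphism $\pi|_Y\colon Y\to\mathbb{A}^m_K$ whose degree is bounded by $\deg Y\le d$.

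To conclude, by Chevalley's theorem the image $\pi(U_0)\subset \mathbb{A}^m_K$ is constructible and dense in $\mathbb{A}^m_K$; since $K$ is infinite, $\mathbb{A}^m(K)$ is Zariski dense and hence meets $\pi(U_0)$, giving a $K$-rational point $q\in \pi(U_0)$. The fiber $\pi^{-1}(q)\cap U_0$ is a nonempty zero-dimensional $K$-scheme which is finite over $K$ of total degree at most $d$, so any of its closed points has residue field $L$ with $[L:K]\le d$ and yields an $L$-valued point of $X$ at which $g$ does not vanish. The main obstacle is securing the degree bound on the Noether normalization; this is precisely what Heintz's Bezout inequality supplies, and the infinitude of $K$ is essential in order to realize the normalization by a linear change of coordinates over $K$ itself rather than only over a finite extension.
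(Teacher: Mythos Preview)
Your argument is correct and follows essentially the same route as the paper's proof: restrict to an irreducible component meeting $D(g)$, perform a Noether normalization over $K$ via a linear change of coordinates (using that $K$ is infinite), bound the degree of the resulting finite map by $d=\prod d_i$, and then pick a $K$-rational point in the base whose fiber meets $D(g)$. The only real difference is that you justify the degree bound explicitly via Heintz's B\'ezout inequality, whereas the paper simply asserts that the Noether normalization can be arranged to have degree at most $d$; your version is the more careful of the two on this point.
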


\begin{proof}
 Let $V(g)\subset X$ be the closed subscheme of $X$ defined by the intersection of the vanishing locus of
$g$ with $X$. Let $X'=X \backslash V(g)$ be an open affine subscheme of $X$. Now by hypothesis $X$
has a $\bar K$-valued point. By restricting to a irreducible component of Spec $A$ containing the $\bar K$
valued point, we can assume that $X$ is irreducible. Let dim $X=m$. By a linear change of coordinates, we can
perform a Noether normalisation such there exists $m$
elements $t_1,...,t_m$ in $A$ such that $A$ is integral over $B=K[t_1,...,t_m]$ and the induced map $f$: Spec
$A \rightarrow$ Spec $B$ on affine schemes corresponding to the inclusion of $B$ in
$A$ has degree atmost $d$. Let $p\in B$
be a $K$ valued point of $B$ which is not in the image of $V(g)$. This is possible to choose since $f$ is a
finite map.
By
going-up lemma, there exists a point $q\in X'$ lying over $p$. Let the residue field extension $[K(q):K(p)]$
be $s$. Then $s\leq$ deg $f \leq d$. Taking $L$ to be $K(q)$, we get the lemma. 
\end{proof}

\begin{lemma}{\label{quantification}}
  Let $V$ be a vector space of dimension $n$ defined over a field $K$ of char $p>0$. Let $G=SL(V)$. Let $K$
be an arbitrary field of char $p>0$. Let $\rho: Sl(V) \rightarrow Sl(V^{\otimes m})$ be the tensor power
representation of $SL(n)$. Then for any non-semistable $K$-rational point $v \in
V^{\otimes m}$, the instability parabolic $P(v)$ is defined over an extension field of $[L:K]$ of degree $\leq
mn^m$.
Equivalently if $t$ is such that $p^t > mn^m$, then the instability parabolic for unstable $K$-rational point 
is defined over $K^{1/{p^t}}$.
\end{lemma}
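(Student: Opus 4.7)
The plan is to apply the Holla-Coiai strategy recalled in Section \ref{The instability parabolic} with $M = V^{\otimes m}$, and to bound the size of the field of definition of a rational point of the scheme $M(P)_{x_m}$ using Lemma \ref{Noether normalization}. First I would reduce to the case $K = K_s$ by Galois descent together with uniqueness of the instability parabolic; it then suffices to exhibit an $L$-valued point of $M(P)_{x_m}$ for some purely inseparable extension $L/K$ with $[L:K] \leq mn^m$, since any such point is of the form $y = hv$ with $h \in SL(n)(L)$, so that $P(v) = h^{-1} P(y) h$ is defined over $L$. The equivalent statement in terms of $K^{1/p^t}$ is then immediate: over a separably closed $K$ such an $L$ is purely inseparable, and $L \subset K^{1/p^t}$ whenever $p^t \geq [L:K]$. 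I would then choose $g_0 \in SL(n)(\bar K)$ so that $P := g_0 P(v) g_0^{-1}$ is defined over $K$, set $x_m = g_0 v$, let $\lambda$ be the unique $K$-rational instability $1$-PS of $x_m$ in a $K$-split maximal torus of $P$, and let $V^j = \bigoplus_{i \geq j} V_i$ with $j = m(x_m, \lambda)$; all of $P$, $\lambda$, and $V^j$ are defined over $K$.

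The key step is to realise $M(P)_{x_m}$ through the orbit map $\phi : SL(n) \to V^{\otimes m}$, $g \mapsto g \cdot v$, and to work instead with its preimage $Y = \phi^{-1}(V^j) \subset SL(n) \subset \mathbb{A}^{n^2}$. As a subscheme of $\mathbb{A}^{n^2}$, $Y$ is cut out by $\det(Y_{ij}) - 1 = 0$ (degree $n$) together with the pullbacks under $\phi$ of the linear forms on $V^{\otimes m}$ vanishing on $V^j$. Since each coordinate of $\phi(g)$ is a polynomial of total degree $m$ in the matrix entries of $g$, these pullbacks are polynomials of degree $m$. The Holla-Coiai element $g_0$ provides a $\bar K$-point of $Y$, and any $L$-valued point of $Y$ on which $\det \neq 0$ yields, via $\phi$, the required $L$-point of $M(P)_{x_m}$. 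Lemma \ref{Noether normalization} applied to $Y$, with $g = \det$ as the non-vanishing function, then produces such an $L$-valued point with $[L:K]$ bounded by the product of the degrees of a suitable defining system.

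The main obstacle is controlling this product of degrees. A naive choice -- namely the single $\det$ equation together with one linear form for every basis vector of $V^{<j} = \bigoplus_{i<j} V_i$ -- yields only a bound of the form $n \cdot m^{c}$, where $c = \dim V^{<j}$ can be as large as $n^m - 1$. The fix is to pass to a minimal sufficient system of equations: since the image of $\phi$ has dimension at most $n^2 - 1 < n^m$ (for $n, m \geq 2$), only a limited number of the degree-$m$ equations can be independent on $Y$, and after a generic linear change of coordinates on $V^{\otimes m}$ together with a Noether normalisation of $Y$ itself, the product of the relevant degrees is estimated by $m \cdot \dim V^{\otimes m} = m n^m$ through a Bezout-type count. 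I expect the careful execution of this minimal-equation argument to be the technical heart of the proof.
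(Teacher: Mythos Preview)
Your route through the Holla--Coiai scheme $M(P)_{x_m}$ is \emph{not} the one the paper takes; the paper in fact remarks afterwards that its method differs from [CH] precisely by avoiding the orbit-map machinery. After reducing to $K=K_s$, the paper chooses an instability $1$-PS $\lambda$ of $v$ over $\bar K$ and conjugates it by some $g\in SL(n,\bar K)$ to a \emph{diagonal} $1$-PS $\lambda'=g\lambda g^{-1}$ with respect to the monomial basis $w_1,\dots,w_{n^m}$ of $V^{\otimes m}$. The key observation is that for a diagonal $\lambda'$ the number $m(\lambda',u)$ depends only on the \emph{support} of $u$, i.e.\ on which $w_i$ occur with nonzero coefficient. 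Writing the $i$-th coordinate of $(G_{ij})\cdot v$ as a degree-$m$ polynomial $f_{i_v}\in K[G_{ij}]$ (the ``elementary polynomials''), one applies Lemma~\ref{Noether normalization} to the scheme cut out by those $f_{i_v}$ vanishing at $g$, with the product of the remaining $f_{i_v}$ as the nonvanishing function. This yields $g'\in G(L)$ with $g'v$ having \emph{exactly} the same support as $gv$; hence $\nu(\lambda',g'v)=\nu(\lambda',gv)=\nu(\lambda,v)$, so $\lambda'$ is already an instability $1$-PS for $g'v$, and $g'^{-1}\lambda'g'$ is one for $v$ defined over~$L$. No appeal to Proposition~\ref{strongly seperable} or to the structure of $M(P)_{x_m}$ is needed: the optimality of $\lambda'$ at $g'v$ is verified by hand from the support, which is what your approach would have to extract from the Holla--Coiai theory.

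Your final paragraph is where the real gap lies. The ``Bezout-type count'' you sketch does not produce $mn^m$: even if only $\dim SL(n)$ of the degree-$m$ equations are independent on $Y$, a product-of-degrees bound gives something like $m^{n^2-1}$, and replacing a product by $m\cdot\dim V^{\otimes m}$ has no justification as written. The paper does not attempt any such reduction. It simply counts: there are $r\le n^m$ vanishing elementary polynomials, each of degree~$m$, and it reads off $[L:K]\le r\cdot m\le mn^m$ directly from Lemma~\ref{Noether normalization}. So the ``technical heart'' you anticipate is absent from the paper's argument, and the minimal-equation manoeuvre you propose is both unnecessary for the paper's route and, as you have stated it, insufficient to reach the claimed bound.
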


\begin{proof}
 Let $X_1,...,X_n$ be a basis of $V$ over $K$. By uniqueness of instability parabolic and Galois descent, we
may assume that all the objects are defined over the seperable closure $K_s$ of $K$. Hence without loss of
generality we may assume $K=K_s$. Let $R=K\langle X_1,..,X_n \rangle$ denote the non-commutative
polynomial ring in the variables $X_1,...,X_n$. Let $R^m$ denote the vector subspace of $R$ consisting of
non-commutative monomials in $X_1,...,X_n$ of degree $m$. Let $w_1,...,w_M$ denote an ordered basis of
$R^m$ consisting of non-commutative monomials of degree m (words). Note that $M=n^m$. Then $V^{\otimes m}$ can
be identified
with $R^m$, the identification compatible with the action of $Sl(V)$. For any extension field $[L:K]$
we will think of elements $g \in G(L)$ as $n \times n$ matrices $g_{ij}$ with coefficients in $L$.
\\Consider the commutative polynomial ring $B=K[G_{ij}]$. Any $g=g_{ij} \in G(L)$ can thus be thought of as a
$L$-valued point of Spec $B$. Let $v=\sum a_iw_i$ be any element in $R^{\otimes m}$ . We define the elementary
polynomials associated to $v$ as follows:\\
\indent Denote by $K[G_{ij}]
\langle
X_1,...,X_n \rangle$ the noncommutative
ring in the variables $X_i$, with coefficients in the
commutative polynomial ring $K[G_{ij}]$.  Consider the set mapping $\theta: K\langle X_1,...,X_n \rangle
\rightarrow K[G_{ij}] \langle
X_1,...,X_n \rangle$ defined as follows:\\
\indent $\theta$ sends a variable $X_i$ in $K\langle X_1,...,X_n \rangle$ to $\sum G_{ij} \cdot X_j$ and
extends the
action in the obvious way to $K\langle X_1,...,X_n \rangle$. The ordered set of coefficients of the various
noncommutative monomials in the $X_i$'s that occur in $\theta v$ (which are polynomials in the
commutative ring $K[G_{ij}]$) will be called the elementary polynomials corresponding to $v$, denoted $EP_v$
(some of which
may be the zero polynomial for a given $v$). More precisely, if $\theta (v) =  \sum f_{i_v} w_i$, with
$f_{i_v} \in K[G_{ij}]$, then the set ordered set ${f_{1_v},f_{2_v},...,f_{M_v}}$ will be defined to be the
elementary polynomials associated to $v$. Just for the sake of clarity we explain this definition (of
elementary polynomials) by taking a simple example.\\
$~$ In the two-variable case, consider the action of $Sl(2,K)$ on $V^{\otimes 2}$
as above. If
$\{X_1^2, X_1X_2, X_2X_1 , X_2^2\}$ denote the ordered basis for $V^{\otimes 2}$, then the elementary
polynomials  associated to the vector $v=X_{1}^2 + X_1 \cdot X_2$  will be computed as follows:
Consider the image of $v$ under $\theta$:
   \[  X_1 \rightarrow G_{11}X_1 + G_{12}  X_2 \]
   \[  X_2 \rightarrow G_{21}X_1 + G_{22} X_2 \]
 Hence the image of $v=X_{1}^2 + X_1  X_2 $ will be: \\
 $(G_{11}X_1 + G_{12}X_2)^2 +
(G_{11}X_1 + G_{12} X_2)  (G_{21}X_1 + G_{22}X_2) \\
= (G_{11}^2X_1^2 +
G_{11}G_{12}X_1 X_2 + G_{12}G_{11}X_2 X_1 + G_{12}^2 X_2^2) + (G_{11}G_{21}X_1^2 + G_{11}G_{22}X_1X_2 +
G_{12}G_{21}X_2X_1 + G_{12}G_{22}X_2^2)\\
= (G_{11}^2 + G_{11}G_{21})X_{11}^2 + (G_{11}G_{12} + G_{11}G_{22})X_1X_2 + (G_{12}G_{11} +
G_{12}G_{21})X_2X_1 + (G_{12}^2 + G_{12}G_{22})X_2^2$.\\
$~$ Thus the elementary polynomials corresponding to $X_{1}^2 + X_1  X_2$ are:\\
{ $f_{1_v}=(G_{11}^2 + G_{11}G_{21}),
f_{2_v}=(G_{11}G_{12} + G_{11}G_{22}), f_{3_v}=(G_{12}G_{11} + G_{12}G_{21}), f_{4_v}=(G_{12}^2 +
G_{12}G_{22})$. }\\
\indent Note that for any $v \in V^{\otimes m}$, the elementary polynomials $f_{i_v}$ all have degree $m$.  If
$f_v
\in EP_v$ is an elementary polynomial and $g=g_{ij} \in G(\bar K)$ is any element,
then by $f_v(g)$, we mean the element of $\bar K$ obtained by
substituting $G_{ij}=g_{ij}$ in $f_v$. 
  
Let $v \in V^{\otimes m}$ (or equivalently in $R^m$) be a non-semistable vector
for the action of $SL(V)$. Let $v=\sum a_i \cdot w_i$ be the expansion of $v$ in terms of the basis
vectors. Let $\lambda(t)=\lambda_{ij}(t)$ be a 1-PS subgroup of $G({\bar K})$ which is an instability 1-PS
for $v$. Then there exists an element $g=(g_{ij}) \in G({\bar K})$ such that $g \cdot \lambda(t) \cdot g^{-1}$
is of the form 

\[
\begin{pmatrix}
t^{a_1} & 0 & \cdots & 0 \\
0 & t^{a_2} & \cdots & 0 \\
\vdots & \vdots & \ddots & \vdots \\
0 & 0 & \cdots & t^{a_n}
\end{pmatrix}
\]

for some $a_1,...,a_n$ such that $a_1 \geq a_2 \geq ... \geq a_n$.\\ 
  Then $g \lambda(t) g^{-1} = \lambda'(t)$ is a instability 1-PS for $g \cdot v$ with
$\nu(\lambda,v)=\nu(\lambda',gv)$. Let $g.v=\sum b_iw_i$. Clearly $b_i=f_{i_\nu}(g_{ij})$. Let
$f_{{i_1}_{v}},...,f_{i_{r_v}}$ (resp. $f_{i_{{r+1}_{v}}},...,f_{{i_M}_{v}}$) denote the set of
elementary polynomials in $EP_v$ which vanish at $g$ (resp. are nonzero at $g$). By
lemma {\ref{Noether normalization}}, there exists    
an extension field $L$ of $K$ with $[L:K] \leq rm$ and an $L$-valued point $g' \in G(L)$ such that
$f_{{i_1}_{v}},...,f_{i_{r_v}}$ all vanish at $g'$ and $f_{i_{{r+1}_{v}}},...,f_{{i_M}_{v}}$ are all
non-vanishing at $g'$.
Thus $gv$ and $g'v$ have the same set of monomials with non-zero coefficients. Note that since $\lambda'(t)$
is
of the form 
\[
\begin{pmatrix}
t^{a_1} & 0 & \cdots & 0 \\
0 & t^{a_2} & \cdots & 0 \\
\vdots & \vdots & \ddots & \vdots \\
0 & 0 & \cdots & t^{a_n}
\end{pmatrix}
\]
 an simple observation shows that $m(\lambda',gv)=m(\lambda',g'v)$ and hence
$\nu(\lambda',gv)=\nu(\lambda',g'v)$. Also $\lambda'(t)$ is an instability 1-PS for $g'v$. This is seen as
follows: $g' \lambda g'^{-1}$ is an instability 1-PS of $g'v$ and    
$\nu(\lambda, v)=\nu(g' \lambda g'^{-1}, g'v)$. But $\nu(\lambda,v) = \nu(\lambda',gv) = \nu(\lambda',g'v)$. 
Thus $\nu(g' \lambda g'^{-1},g'v)=\nu(\lambda',g'v)$ and
hence
$\lambda'(t)$ is also an instability 1-PS for $g'v$. This implies that $g'^{-1} \lambda' g'$ is an
instability 1-PS of
$v$.
But $g'^{-1} \lambda' g'$ is defined over $L$. This shows that an instability 1-PS and hence the instability
parabolic of $v$ is defined over $L$.  Since $r \leq n^m$, we see that deg $[L:k] \leq mn^m$. Since $K$ can be
assumed to be seperably closed , the only algebraic extensions possible are those obtained by taking $p^l$-th
roots of generators of $K$ for various non-negative integers $l$. Since $p^t > mn^m$,  it is clear that the
instability parabolic for $v$ is defined over $K^{1/{p^t}}$.
This completes the proof of the lemma.
\end{proof}

Notation: For any integers $n$ and $r$, with $r < n$, set the symbol $nC_r$ ($n$ choose $r$) to be
equal to $n!/(r! (n-r)!)$. \\
\indent We use the above lemma to prove the following theorem:

\begin{theorem} \label{Main theorem}
 Let $G=SL(n)$. Let $X$ be a smooth projective variety over an algebraically closed field $k$. Let $K(X)$
denote its function field. Let $V$, $m$ and $\rho$ be as in lemma {\ref{quantification}}. Let $E$ be any
principal $G$-bundle on $X$. Let $N= \underset{0 \leq r \leq n^m-1}{\text {max}} n^mC_r \cdot (rm)$.  Let
$t$ be any integer such that $p^t > N$.
Let suppose $E$ together together with its first $t$ frobenius pullbacks is semistable. Then
the induced $Sl(V)$ bundle is also semistable. 
\end{theorem}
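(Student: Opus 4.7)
The plan is to argue by contradiction using the framework of Section 3. Suppose that the induced $Sl(V^{\otimes m})$-bundle $E_\rho$ is not semistable. Then there exists a reduction of structure group of $E_\rho$ to some maximal parabolic $P \subset Sl(V^{\otimes m})$ whose associated section $\sigma\colon X \to E(Sl(V^{\otimes m})/P)$ contradicts semistability. By the proposition on $T_\sigma$ of Section 3, this forces the restriction $\sigma_\circ$ to be a non-semistable $k(X)$-valued point for the action of the generic-fiber group scheme $E(G)_\circ$ on $E(Sl(V^{\otimes m})/P)_\circ$, linearized by the natural very ample generator.

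The heart of the argument is to bound the field of definition of the instability parabolic $P(\sigma_\circ)$. I would apply the elementary-polynomials technique of Lemma \ref{quantification} to $\sigma_\circ$, viewed (via the Plücker-type embedding of $Sl(V^{\otimes m})/P$) as a point of a projective space built from $V^{\otimes m}$. After replacing $k(X)$ by its separable closure (permissible by Galois descent and uniqueness of the instability parabolic), one chooses an instability 1-PS $\lambda$ of $\sigma_\circ$ together with a conjugating element $g \in E(G)(\bar{k(X)})$ making $g\lambda g^{-1}$ diagonal. The subtlety absent from the pointwise statement of Lemma \ref{quantification} is that the particular subset $S$ of $r$ elementary polynomials (out of $M = n^m$) vanishing at $g$ cannot be predetermined over $k(X)$: to obtain a bound that does not depend on this combinatorial choice, one must run Lemma \ref{Noether normalization} separately over each of the $\binom{n^m}{r}$ possible $r$-subsets and aggregate. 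Since each elementary polynomial has degree $m$, this produces a rational point on the appropriate locally-closed subscheme of $E(G)$ over an extension of $k(X)$ of degree at most $N = \max_{0 \leq r \leq n^m - 1} \binom{n^m}{r} \cdot (rm)$. The verification that the conjugated 1-PS produced this way remains an instability 1-PS of $\sigma_\circ$, and thus that $P(\sigma_\circ)$ itself is defined over this extension, is carried out exactly by the $\nu$-invariance argument used in Lemma \ref{quantification}.

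Because $k(X)$ has been replaced by its separable closure, every algebraic extension of degree at most $N$ is purely inseparable; combined with $p^t > N$ this forces $P(\sigma_\circ)$ to be defined over $k(X)^{1/p^t}$. Translating via the Coiai-Holla argument sketched in the introduction, the $t$-th frobenius pullback $F^{t*}E$ (semistable by hypothesis, along with all intermediate pullbacks) gives a corresponding reduction whose instability parabolic is now rational over $k(X)$. Proposition \ref{HC} applied to $F^{t*}E$ then yields semistability of $F^{t*}(E_\rho)$, and since semistability of a frobenius pullback implies semistability of the original bundle, $E_\rho$ is semistable, contradicting our starting assumption.

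The main obstacle is the second paragraph: transporting the elementary-polynomials construction rigorously from the vector-space setting of Lemma \ref{quantification} to the generic fiber of the associated bundle, and justifying the combinatorial factor $\binom{n^m}{r}$ in $N$. Once this uniformity across vanishing subsets is established, the rest of the proof is essentially bookkeeping with the propositions already set up in Section 3.
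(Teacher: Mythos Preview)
Your overall strategy is right and matches the paper's: reduce via Proposition~\ref{HC} to bounding the field of definition of the instability parabolic of $\sigma_\circ$, run the elementary-polynomials/Noether-normalisation argument of Lemma~\ref{quantification}, and then use the Frobenius pullback to make the instability parabolic rational. But your accounting for the bound $N$ is wrong, and with it the core computation of the proof.

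The point $\sigma_\circ$ does not live in (the projectivisation of) $V^{\otimes m}$. A maximal parabolic $P\subset Sl(W)$, with $W=V^{\otimes m}$, corresponds to a Grassmannian of $r$-planes in $W$ for some $1\le r\le n^m-1$, and the Pl\"ucker embedding lands $Sl(W)/P$ in $\mathbb{P}(\wedge^r W)$. So the relevant representation is $G\to Sl(\wedge^r(V^{\otimes m}))$, which has $\binom{n^m}{r}$ basis monomials, and each elementary polynomial associated to a vector in $\wedge^r(V^{\otimes m})$ has degree $rm$ (an $r$-fold wedge of degree-$m$ monomials). Running Lemma~\ref{Noether normalization} exactly as in Lemma~\ref{quantification} then gives $[L:k(X)]\le \binom{n^m}{r}\cdot(rm)$, and taking the maximum over the finitely many possible $r$ (one for each conjugacy class of maximal parabolic in $Sl(W)$) produces $N$. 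That is the origin of the binomial coefficient: it is the dimension of the exterior power, i.e.\ the number of elementary polynomials in the new setting, not a combinatorial count of vanishing subsets.

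Your proposed ``aggregation over all $\binom{n^m}{r}$ possible $r$-subsets of vanishing elementary polynomials'' is both unnecessary and mathematically unjustified. For a fixed $\sigma_\circ$ and a fixed conjugating element $g$, there is exactly one vanishing subset, and Lemma~\ref{Noether normalization} already produces an $L$-point on that specific locally closed locus; nothing needs to be run over all subsets. Your $r$ is also not the $r$ in the statement: in the theorem $r$ indexes the maximal parabolic (the rank of the Grassmannian), not the number of vanishing polynomials. Once you correct the ambient space to $\wedge^r(V^{\otimes m})$ and recompute the degree and count of the elementary polynomials, the rest of your outline (pure inseparability over $k(X)_s$, the Frobenius pullback making $P(\sigma_\circ)$ rational, and the final appeal to Proposition~\ref{HC}) goes through exactly as in the paper.
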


\begin{proof}
 Let $W=V^{\otimes m}$. Let $E_{Sl(W)}$ denote the induced $Sl(W)$ bundle. We want to show that $E_{Sl(W)}$
is  also semistable. By lemma {\ref{HC}}, this is equivalent to showing that for any maximal parabolic $P$ in
$Sl(W)$ and any
reduction of structure group to $P$, the instability parabolic for the point $\sigma_\circ$ in
$E(Sl(W)/P)_\circ$
corresponding to this reduction is rational.  Let $E(G)_\circ$ be as before. $E(G)$ acts on
$E(Sl(W)/P)$
which is linearized by the very ample line bundle $E(\mathcal L)$ explained before. Since $E_\circ$ gets
trivialized after a finite
seperable extension, we get isomorphisms $E_\circ \otimes_{k(X)} k(X)_s \simeq G \otimes_{k(X)} k(X)_s$ and
$E(Sl(W)/P)_\circ
\otimes_{k(X)} k(X)_s \simeq (SL(W)/P) \otimes_{k(X)} k(X)_s$, the isomorphisms being compatible with the
action. Since $P$ is
a maximal parabolic,
$Sl(W)/P$ is isomorphic to the grassmannian of $r$ dimensional subspaces of $W$ for some $r < $ dim $W$. Using
$E(\mathcal L)_\circ \otimes_{k(X)} k(X)_s$, we get an $G(k(X)_s)$-equivariant embedding of $E(Sl(W)/P)_\circ
\otimes_{k(X)} k(X)_s$ inside $\mathbb{P}(\wedge^r(W))$. We need to show that for this action of
$G(k(X)_s)$ on $\mathbb{P}(\wedge^r(W))$, the instability parabolic for the point $\sigma_\circ$ corresponding
to
this reduction is rational. By lifting this point to a point in $\wedge^r(W)$ (call it $\sigma_\circ$ again),
it boils down to proving the same fact for the action of $G(k(X)_s)$ on $\wedge^r(W)$. This representation of
$G$ on $\wedge^{r}(W)$  is the standard representation of $G$ on $\wedge^r(W)$, induced from the tensor
power representation of $G$ on $V^{\otimes m}$. 
     
Corresponding to the basic $X_1,...,X_n$ of $V$, we get a standard basis of $\wedge^r(W)$ consisting of
vectors of the form $w_{i_1} \wedge..\wedge
w_{i,r}, {(i_1,...,i_r) \in {1,...,M}}~\text{with}~i_1 < ... < i_r$, where each $w_i$ is a noncommutative
monomial in the
$X_i$'s of degree $m$ as in lemma {\ref{quantification}}. Choose an ordering of this basis. Let $\{
W_1,\cdots, W_S \}$ denote
the ordered basis. Note that
$S=n^mC_r$. For any non-semistable vector $v \in \wedge^r(V^{\otimes m})$, let $v = \sum b_iW_i$ be its
expansion in
terms of the basis vector $W_i$. Define the elementary polynomials $EP_v$ similar to lemma
{\ref{quantification}} to be the polynomials in $K(X)[G_{ij}]$ occuring in the coefficients of the image of
$W_i$
when acted upon by a $n \times n$ matrix of indeterminates $G_{ij}$.\\
Note that the degree of the elementary polynomials in now $mr$. Now by using
the same argument as in lemma {\ref{quantification}} by considering $W_i$'s instead of the
noncommutative monomials $w_i$'s in lemma {\ref{quantification}}, we see that the instability parabolic for
the vector
$\sigma_\circ$ is defined over an extension field $[L:k(X)]$, where deg $[L:k(X)] \leq n^mC_r(rm) < p^t$. 
Hence
by lemma \ref{quantification}, for any reduction of structure group to any maximal
parabolic $P$ in $Sl(W)$ the instability 1-PS and hence the instability
parabolic corresponding to $\sigma_\circ$ is defined over $k(X)^{1/p^t}$. Now
consider the action of $F^{t^*}(E(G))_\circ$ on
$F^{t^*}(E(Sl(W)/P))_\circ$. For this action $F^{t^*}(\sigma_\circ)$ has its instability 1-PS and hence
its instability parabolic defined over $k(X)$ via the isomorphism in the commutative diagram shown
below:

  \[
\xymatrix{
        \Spec K \ar[r] \ar[d]  & \Spec K   \\
        \Spec K^{p^{-t}} \ar[ur]_{\simeq} & }
\]

Let $F^{t^*}(\pi):F^{t^*}(E(Sl(W)/P) \rightarrow X$, denote the pullback of $\pi$ under $F^{t}$. Similarly
let  $F^{t^*}(T_\pi)$ denote the pullback of the relative tangent bundle of $E(Sl(W)/P)$ under $F^t$ which is
the same as the relative tangent bundle of the pullback of $E(Sl(W)/P)$ under $F^t$. Since $F^{t^*}(E)$ is
semistable and the instability 1-PS corresponding to every reduction to every maximal parabolic is rational,
deg $F^{t^*}(\sigma)^* F^{t^*}(T_\pi)> 0$. But deg $F^{t^*}(\sigma)^*(F^{t^*}(\pi))=$ deg
$F^{t^*}(T_\sigma)= p^t \cdot$deg
$T_\sigma$. This follows from the fact that for any line bundle $L$ on $X$, $F^{t^*}(L)$ is isomorphic to 
$L^{p^t}$. Hence deg $T_\sigma > 0$ for every reduction of $E_{Sl(W)}$ to every maximal parabolic $P$ in
$Sl(W)$ and hence by lemma {\ref{HC}}, $E_{Sl(W)}$ is also semistable.  
\end{proof}

Now let $\rho'$ be an arbitrary representation of $Sl(V)$. We use the above lemma to get bounds for the number
of semistable frobenius pullbacks required for an $SL(V)$-bundle $E$, so that the induced bundle on extension
of structure group via $\rho'$ is again semistable.

 Let $\rho':Sl(V) \rightarrow Sl(W)$ be an arbitrary representation of $Sl(V)$. Let $0=W_0 \subset W_1 \subset
\cdots \subset W_l = W$ be the Jordan-Holder filtration of $W$. Then $W_i/W_{i-1}$ are simple $Sl(V)$-modules.
Any
simple $Sl(V)$-module $L(\lambda)$ corresponding to a highest weight vector $\lambda=\sum a_i\omega_i$ is an
$Sl(V)$-submodule of $V^{\otimes \mid \lambda \mid}$, where $\mid \lambda \mid = \sum ia_i$ is called the
degree of $\lambda$. Following Langer (see [L]), we call the maximum of the degrees of the dominant weights
whose modules occur as the successive
quotients in the Jordan-Holder filtration as the Jordan-Holder degree of $W$, denoted JH($W$).

\begin{lemma} \label{Arb rep of Sl(V)}
 Let $\rho': Sl(V) \rightarrow Sl(W)$ be an arbitrary representation of $W$. Let JH($W$)= $d$. Let $E$ be a
$Sl(V)$-bundle on $X$ such that $F^{t^*}(E)$ is semistable for some $t$ such that $p^t > \underset{0 < r \leq
n^d-1}{\text
{max}} n^dC_r(rd)$.
Then the induced $Sl(W)$-bundle is also semistable.
\end{lemma}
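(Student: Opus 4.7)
The plan is to leverage the Jordan--H\"older filtration of $W$ and apply the technique of Theorem \ref{Main theorem} to each simple JH factor separately.

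First, I would record the preliminary observation that the hypothesis ``$F^{t^*}(E)$ is semistable'' already implies $F^{i^*}(E)$ is semistable for every $0 \leq i \leq t$: since the Frobenius pullback of a non-semistable bundle is non-semistable (Section \ref{Frobenius morphism}), each intermediate pullback is forced to be semistable as well. This matches the form of hypothesis used in Theorem \ref{Main theorem}.

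Next, the JH filtration $0 = W_0 \subset W_1 \subset \cdots \subset W_l = W$ of $W$ as an $SL(V)$-module induces a filtration of the vector bundle $E_W$ by subbundles $E(W_i)$ with successive quotients $E(L(\lambda_i))$. Since any extension of semistable bundles of the same slope is semistable, and since all the bundles here have trivial determinant (hence slope zero), it suffices to prove that each $E(L(\lambda_i))$ is semistable. This reduces the lemma to the case of simple highest-weight representations of $SL(V)$ of degree at most $d$.

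For each $i$, I would rerun the argument of Theorem \ref{Main theorem}, but with the tensor power representation replaced by the simple representation $\rho_i : SL(V) \to SL(L(\lambda_i))$. Two numerical facts drive the bound: since $L(\lambda_i)$ is an $SL(V)$-submodule of $V^{\otimes |\lambda_i|}$, we have $\dim L(\lambda_i) \leq n^{|\lambda_i|} \leq n^d$; and the matrix coefficients of the $SL(V)$-action on $L(\lambda_i)$, as restrictions of those on $V^{\otimes |\lambda_i|}$, are polynomials in the generic matrix entries $G_{ij}$ of degree $|\lambda_i| \leq d$. For a reduction of $E(L(\lambda_i))$ to a maximal parabolic corresponding to an $r$-dimensional subspace ($0 < r < \dim L(\lambda_i) \leq n^d$), the associated section sits in $\mathbb{P}(\wedge^r L(\lambda_i))$, and its ``elementary polynomials'' (in the sense of Lemma \ref{quantification}) then have degree at most $rd$ with at most $n^d C_r$ of them. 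Lemma \ref{Noether normalization} bounds the degree of the field of definition of the corresponding instability 1-PS by $n^d C_r \cdot (rd) < p^t$, and the Frobenius-pullback argument from the end of the proof of Theorem \ref{Main theorem} then yields semistability of $E(L(\lambda_i))$.

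The main obstacle is bookkeeping the two numerical estimates uniformly across all JH factors and all maximal parabolics: one must verify the monotonicity $(\dim L(\lambda_i))C_r \cdot (r|\lambda_i|) \leq n^d C_r \cdot (rd)$ (which follows from $\dim L(\lambda_i) \leq n^d$, $|\lambda_i| \leq d$, and the fact that $mC_r$ is increasing in $m$ for fixed $r$), so that the single hypothesis $p^t > \max_{0 < r \leq n^d-1} n^d C_r \cdot (rd)$ controls the field of definition for every factor simultaneously. Once this is checked, the filtration argument in the second paragraph assembles the semistability of the pieces into semistability of $E_W$.
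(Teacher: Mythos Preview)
Your proposal is correct, and your reduction via the Jordan--H\"older filtration together with the extension-of-slope-zero-semistables argument is exactly the paper's framing as well. The difference lies in how the semistability of each simple piece $E(L(\lambda_i))$ is obtained.

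You propose to \emph{rerun} the proof of Theorem \ref{Main theorem} with $L(\lambda_i)$ in place of $V^{\otimes m}$, tracking that $\dim L(\lambda_i)\le n^{|\lambda_i|}\le n^d$ and that the matrix coefficients have degree $\le d$, so that the elementary polynomials for a rank-$r$ reduction have degree $\le rd$ and number $\le \binom{n^d}{r}$, whence Lemma \ref{Noether normalization} gives the same bound. This works, but requires re-entering the GIT argument and doing the monotonicity bookkeeping you describe.

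The paper instead treats Theorem \ref{Main theorem} as a black box: it applies it directly to the ambient tensor power representation $V^{\otimes i}$ (for each $i\le d$) to conclude that $E(V^{\otimes i})$ is semistable of degree zero, and then simply observes that $E(L(\lambda_i))$ is a degree-zero \emph{subbundle} of a degree-zero semistable bundle, hence semistable. No re-examination of instability parabolics or elementary polynomials for the simple modules is needed. Your explicit remark that $F^{t^*}(E)$ semistable forces $F^{i^*}(E)$ semistable for all $i\le t$ is exactly what justifies invoking Theorem \ref{Main theorem} for each tensor power $V^{\otimes i}$ with $i\le d$, so that observation is well placed.

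In short: your route is sound but does more work than necessary; the paper's subbundle trick bypasses the need to adapt the instability-parabolic argument to each simple factor.
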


\begin{proof}
 Let  $0=W_0 \subset W_1 \subset \cdots \subset W_l = W$ be the Jordan-Holder filtration of $W$ as before.
Then each
successive quotient is a $Sl(V)$-submodule of the $Sl(V)$-module $V^{\otimes i}$, for some $i \leq
d$. Since $F^{t^*}(E)$ is semistable, by lemma \ref{Main theorem} each of the induced vector bundles
obtained by extension of structure group of $E$ using these tensor power representations are also semistable
and of degree zero. Since a degree zero subbundle of a semistable bundle of degree zero is also semistable ,
we see that the induced vector bundle $E_{Sl(W)}$ is filtered by semistable bundles of degree zero and
hence $E_{Sl(W)}$ is also semistable of degree zero. This completes the proof of the lemma.
 
\end{proof}

\begin{remark}

Let $V$ be a vector space defined over $K$. Let $0 \rightarrow W'
\rightarrow W \rightarrow W'' \rightarrow 0$ be
a short-exact sequence of $Sl(V)$-modules defined over $K$. If the instability parabolic for each of the
unstable $K$-rational points in $W$ is defined over $K^{1/p^t}$ then so is the case for all the unstable
$K$-rational points in $W'$. However it does not seem easy to bound the field of definition of the instability
parabolics for unstable $K$-rational points in $W$ in terms of similar bounds for $W'$ and $W''$. Similarly
it does not seem possible to determine the field of definition of
the instability parabolic for all the unstable $K$-rational points in $W''$ knowing the same for $W$.
This is because unstable $K$-rational points in $W$ may not surject onto the unstable $K$-rational points in
$W''$. However if an integer $t$ satisfies the property that any $Sl(V)$-bundle with first $t$-frobenius
pullbacks semistable induces semistable $Sl(W')$ and $Sl(W'')$-bundles on extensions of structure
group, then clearly the induced $Sl(W)$-bundle is also
semistable. Similarly, if integer $s$ satisfies the property that any $Sl(V)$-bundle with first $s$-frobenius
pullbacks semistable induces a semistable $Sl(W)$ on extension of structure
group, then clearly the induced $Sl(W'')$-bundle is also
semistable. This is because any degree zero quotient of a semistable bundle of
degree zero is also semistable of degree zero. Hence for computing the number of semistable frobenius
pullbacks required for a $Sl(V)$-bundle to induce a semistable bundle on extension of structure group, it
suffices to compute the same for the tensor power representation. Then using the fact that an arbitrary
representation $W$ of $SL(V)$ can be filtered by $Sl(V)$-modules which are submodules of a suitable
tensor-power representation, we get bounds for the number of semistable frobenius pullbacks required so that
the induced $Sl(W)$-bundle is semistable.
 
\end{remark}

\begin{remark}
 Note that one of the major differences between the methods for estimating the field of
definition of the instability parabolic described here and the methods of [RR] and [CH] is that unlike their
methods we do not use the orbit map $E(G)_\circ \times E(G/P)_\circ \rightarrow
E(G/P)_\circ$ and try and bound its non-seperability. We directly estimate the field of definition of the
instability parabolic which is probably
weaker than trying to bound the non-reducedness of the stabilizers of the various unstable $K$-rational points
which does not seen quantifiable. Indeed it is an open problem as to whether it is possible to have a
representation of a semisimple group $G$ such that the
stabilizers of some of the unstable $K$-rational points in the representing space are non-reduced but
their instability parabolics are rational. We do not know the answer to this yet.
\end{remark}


\section{Case of an arbitrary reductive group} \label{Case of a arbitrary reductive group}
 In this section we get bounds for the field of definition of the instability parabolic for an arbitrary
representation of an arbitrary reductive algebraic group.\\
$~$ Let $G$ be a reductive algebraic group defined over $k$. Fix an embedding $i: G \hookrightarrow Gl(V)$,
where $V$ is a $n$-dimensional vector space. Fix a maximal torus $T$ in $G$. 

\begin{theorem}\label{quantification in general}
 
 Let $[F:k]$ be an extension of fields. Let $\rho: G \rightarrow Gl(W)$ be a finite dimensional
representation of $G$ defined over $F$. Then there
exists an integer $t$, such that for any unstable $F$-rational point in $W$, its instability parabolic is
defined over $F^{1/p^t}$.

\end{theorem}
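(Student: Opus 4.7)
The plan is to generalize the argument of Lemma \ref{quantification} from $SL(V)$ acting on $V^{\otimes m}$ to an arbitrary reductive $G$ acting on an arbitrary representation $W$, by setting up the orbit map on $G$ viewed as a closed subvariety of an ambient affine space and then applying Noether normalization uniformly in the unstable point $w$. As before, by uniqueness of the instability parabolic together with Galois descent, I may replace $F$ by its separable closure and assume $F = F_s$. Any finite extension $L/F$ produced below is then automatically purely inseparable and contained in $F^{1/p^t}$ for $t = \lceil \log_p [L:F] \rceil$, so it suffices to produce such an $L$ with $[L:F]$ bounded by a constant independent of $w$.

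Fix an unstable $F$-rational point $w \in W$, choose an instability 1-PS $\lambda_w$ defined over $\bar F$, and take $g \in G(\bar F)$ conjugating $\lambda_w$ into the fixed maximal torus $T$; write $\lambda' = g\lambda_w g^{-1} \in X_*(T)$. Decompose $W = \bigoplus_i W_i$ into $\lambda'$-eigenspaces and fix a basis of $W$ compatible with this decomposition. Realize $GL(V)$ as the hypersurface $\{\det(h)\,z = 1\}$ in $\mathrm{Mat}(V) \times \mathbb{A}^1$, so that the embedding $i: G \hookrightarrow GL(V)$ presents $G$ as a closed subvariety of $\mathbb{A}^{n^2+1}$ cut out by polynomial equations of degrees bounded by constants depending only on $G$ and $i$. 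The orbit map $h \mapsto h \cdot w$ from $G$ to $W$ then has coordinate functions $f_{1,w}, \ldots, f_{\dim W, w}$ which are polynomials in these ambient coordinates of degree bounded by the degree of $\rho$ as a morphism of varieties.

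Let $I_0 = \{l : f_{l,w}(g) = 0\}$ and $I_1 = \{l : f_{l,w}(g) \neq 0\}$, and let $Y_w$ be the closed subscheme of the ambient affine space defined by the equations of $G$ together with $\{f_{l,w} = 0\}_{l \in I_0}$. Applied to $Y_w$ with the auxiliary nonvanishing function taken to be $\prod_{l \in I_1} f_{l,w}$, Lemma \ref{Noether normalization} produces a point $g' \in Y_w(L)$ with $[L:F]$ bounded by the product of the degrees of the defining equations. Since all these degrees are bounded uniformly in $w$, there is a single constant $D = D(G, \rho, i)$ with $[L:F] \leq D$. Exactly as in Lemma \ref{quantification}, $g'w$ and $gw$ have the same support pattern among the $W_i$, hence $m(g'w, \lambda') = m(gw, \lambda')$, so $\lambda'$ is an instability 1-PS of $g'w$, and therefore $(g')^{-1} \lambda' g'$ is an instability 1-PS of $w$ defined over $L$. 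Thus $P(w)$ is defined over $L \subseteq F^{1/p^t}$ for $t = \lceil \log_p D \rceil$.

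The main technical obstacle is guaranteeing uniformity of the degree bound as $w$ varies: although the partition $(I_0, I_1)$ depends on $w$, there are only finitely many partitions of $\{1, \ldots, \dim W\}$, and for each one the degrees of the defining polynomials of $G$ and of the elementary polynomials $f_{l,w}$ are controlled by invariants of $G$, $i$, and $\rho$ alone. A secondary nuisance is the $\det^{-1}$ appearing in the coordinate ring of $GL(V)$; this is bypassed by the hypersurface realization above, so that Lemma \ref{Noether normalization}, which requires genuine polynomial defining equations, applies without modification.
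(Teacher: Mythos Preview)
Your argument is correct and follows essentially the same route as the paper: conjugate the instability 1-PS into the fixed torus $T$, record the support pattern of $gw$ via ``elementary polynomials'' on the ambient affine space, and apply Lemma~\ref{Noether normalization} to produce $g' \in G(L)$ with the same pattern over a uniformly bounded extension $L/F$. The only cosmetic differences are that the paper fixes a single $T$-weight basis of $W$ (rather than a $\lambda'$-adapted basis for each $w$) and handles the $\det^{-1}$ issue by writing the matrix entries of $\rho$ as $f_{ij}/(\det)^a$ and passing to numerators, whereas you use the hypersurface realization $\{\det(h)z=1\}\subset \mathbb{A}^{n^2+1}$; both choices yield the same uniform degree bound and the same conclusion.
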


\begin{proof}
   The proof of the lemma is similar to the proof of lemma \ref{quantification}. The main difference now is
that we also have to consider the defining equations of $G$ in $Gl(V)$ along with the elementary polynomials
of the unstable $F$-rational points. As before we may assume that $F$ is seperably closed. Let dim $W=
m$ and dim $V$= $n$. Fix a basis of $V$ via which $Gl(V)$
will be identified with $Gl(n,F)$. $Gl(n,F)$ will be thought of as an open subscheme of
$M_n(F)$ which will identified with $\mathbb{A}^{n^2}_F$.
Let the affine coordinate ring of $G$ for the embedding $\tilde i: G \hookrightarrow M_n(F)$ given by the
composite of $G \hookrightarrow Gl(V) \underset {\text {open subscheme}}{\subset} M_n(F) \simeq 
\mathbb{A}^{n^2}_F$ = Spec $F[G_{ij}]_{1\leq i,j \leq n}$
be isomorphic to $F[G_{ij}, (\text{det}~G_{ij})^{-1}]/(h_1,...,h_s)_{1\leq i,j \leq n}$, for some $h_1,...,h_s
\in F[G_{ij}, (\text{det}~G_{ij})^{-1}]$. 
The valued points of $Gl(V)$ will be thought of
as $n \times n$ invertible matrices. The affine coordinte ring of $Gl(V,F)$ is isomorphic to $A=
F[G_{11}, \cdots ,G_{nn},$ (det $G)^{-1}]$, where det$(G)$ is the determinant polynomials in the $G_{ij}$'s
and a matrix element $g=g_{ij} \in Gl(V,L)$, for any extension field $[L:F]$, will be thought of as an
$L$-valued
point of Spec $A$ in the obvious way. Choose an ordered
simultaneous eigen basis $\{w_1, \cdots ,w_m\}$ of $W$ for all the 1-PS of $G$ which
lie in $T$. With respect to this basis, the matrix of $\rho$ will be an $m \times m$ matrix whoses
entries are regular functions on $G$, which are by definition the restrictions of the regular functions on $
 \mathbb{A}^{n^2}_F$ via the embedding ${\tilde i}$.

  \[
\begin{pmatrix}
\bar{f_{11}}(G_{ij})/ (\overline{\det} (G_{ij}))^{a_{11}} & \bar {f_{12}}(G_{ij})/ (\overline{\det}
(G_{ij}))^{a_{12}} & \cdots & (\bar{
f_{1m}}(G_{ij})/ (\overline{\det} (G_{ij}))^{a_{1m}} \\
\bar {f_{21}}(G_{ij})/ (\overline{\det}  (G_{ij}))^{a_{21}} & (\bar {f_{22}}(G_{ij})/ (\overline{\det}
({G_{ij}))^{a_{22}}} & \cdots & (\bar
{f_{2n}}(G_{ij})/ (\overline{\det} ({G_{ij}}))^{a_{2n}} \\
\vdots & \vdots & \ddots & \vdots \\
(\bar {f_{n1}}(G_{ij})/ (\overline{\det} ({G_{ij}}))^{a_{n1}} & \bar {f_{n2}}(G_{ij})/ (\overline{\det}
G_{ij})^{a_{n2}} & \cdots & (\bar
{f_{nn}}(G_{ij})/ (\overline{\det} ({G_{ij}}))^{a_{nn}}
\end{pmatrix}
\]

where $\bar {f_{ij}}$ and $ (\overline{\det} ({G_{ij}}))$ are regular functions on $G$ which are by
definition the restrictions of the regular
functions $f_{ij}(G_{ij})$ and det($G_{ij}$) resp. from $M_n(F)$ to $G$. By multiplying the
numerator and denominator of each
matrix entry by a suitable power of det $G$, we can assume that all the $a_{ij}$'s occuring in the matrix are
all equal to some non-negative integer , say $a$. Let $w \in W$ be a non-semistable $F$-rational point of
$W$. Let $\lambda(t)$ be an instability 1-PS of $w$. Then there exists an element $g\
\in G$ such that $g\lambda(t)g^{-1} = \mu(t) \subset T$. Clearly $\mu(t)$ is an instability 1-PS of
$gw$.

Now as in lemma \ref{quantification}, we will define the elementary polynomials associated to
$w$ to be  certain \textquotedblleft modified\textquotedblright coefficients that occur in the expansion of
$\rho(G) \cdot w$ in terms of the
basis vectors
$\{w_1, \cdots w_m\}$ . These will  
be certain polynomials in the $F[G_{ij}]_{(1\leq i,j\leq n)}$. \\
More precisely, if $w = \underset{i=1}{\overset{m} \sum} b_iw_i$, then
\[
\rho(G)(w) = \underset{i=1}{\overset{m}\sum} \dfrac{\bar f_{1i}(G_{ij})b_i} {(\overline{\det} ~{
(G_{ij})})^a}w_1 +
\underset{i=1}{\overset{m}\sum} \dfrac{\bar f_{2i}(G_{ij})b_i} {(\overline{\det}~{ (G_{ij})})^a}w_2 + \cdots
+ \underset{i=1}{\overset{m}\sum} \dfrac{\bar f_{mi}(G_{ij})b_i} {(\overline{\det}~{ (G_{ij})})^a}w_m.
\]

Define the elementary polynomials of associated to $w$, denoted
EP$_w$, to be the polynomials $\{F_{1_w}= \underset{i=1}{\overset{m}\sum}
f_{1i}(G_{ij})b_i~;~\cdots~;~
F_{m_w}= \underset{i=1}{\overset{m}\sum} f_{mi}(G_{ij})b_i\}$.

Then clearly,
\[gw= \underset{i=1}{\overset{m}\sum} \dfrac{\bar f_{1i}(g_{ij})b_i} {(\overline{\det} ~{ (g_{ij})})^a}w_1 +
\underset{i=1}{\overset{m}\sum} \dfrac{\bar f_{2i}(g_{ij})b_i} {(\overline{\det}~{ (g_{ij})})^a}w_2 + \cdots
\underset{i=1}{\overset{m}\sum} \dfrac{\bar f_{mi}(g_{ij})b_i} {(\overline{\det}~{ (g_{ij})})^a}w_m.
\]
\[= \dfrac{F_{1_w}(g_{ij})w_1 + \cdots + F_{m_w}(g_{ij})w_m}{ (\det~(g_{ij}))^a}\]

Thus we see that the vanishing or non-vanishing of a particular coefficient of $gw$ depends on whether or
not
the corresponding elementary polynomial vanishes at $g$ or not.

Let $F_{{i_1}_w} , \cdots ,F_{i_{r_w}}$ be exactly the set of elementary polynomials which are vanishing
at $g$. Now as in
lemma \ref{quantification}, we would like to find a quantifiable extension $[L:F]$ and a element $g' \in G(L)$
such that $g'w$ has the same set of coefficients as $gw$ which are zero. 
Consider the affine $F$-algebra $B = F[G_{ij}]/(h_1, \cdots , h_s, F_{{i_1}_w},\cdots,F_{i_{r_w}})$. Let $G_w$
be the product of all the elementary polynomials $F_{i_w}$ which are non-vanishing at $g$. Note that
$g=(g_{ij})$ is a $\bar F$-valued point of Spec $A$ at which $G_w$ in non-vanishing. Hence by lemma
\ref{Noether normalization}, there exists an extension field $[L:F]$ with deg $[L:F] \leq {\text {deg}} 
(\underset{j=1}{\overset{r} \prod}F_{{i_j}_w} \cdot \underset{j=1}{\overset{s} \prod}h_j) = d~({\text {say}})$
and an $L$-valued
point $g'$ of Spec $A$ at which $G_w$ is non-vanishing. Since the polynomials
$h_1,..,h_s$ vanish at $g'$, it follows that $g' \in G(L)$. Now
$gw$ and $g'w$
have the same set of coefficients of the $w_i$'s which are non-zero. Hence as in the proof of
lemma \ref{quantification}, $\mu(t)$ is also an 
instability 1-PS of $g'w$ and hence $g'^{-1} \mu(t) g'$ is an instability 1-PS of $w$, which is clearly
defined over $L$. From this it follows easily that if $t$ is any integer such that $p^t > d$, then for any
unstable $F$-rational point $w \in W$ it has an instability 1-PS and hence its instability parabolic defined
over $F^{1/p^t}$.
\end{proof}

\begin{definition}
 Let $G$ be a reductive algebraic group defined over $K$. Let $W$ be a vector space defined over $K$. Let
$\rho: G \rightarrow Gl(W)$ be a representation of $G$ defined over $K$. We say that the {\bf instability
parabolic for subspaces is defined over L} 
if for the induced
action of $G$ on $\wedge^i(W)$, the 
instability parabolic for any unstable $K$-rational point in $\wedge^i(W)$ is defined over $L$ for all $i$
with $0 < i \leq m$. Similarly we will say that the {\bf instability parabolic for subspaces is rational} if
$L$ can be choosen to be $K$.
\end{definition}

\begin{corollary}\label{Corollary to quantification in general}
 Let $G$ and $\rho$ be as in the above definition. Then there exists an integer $t'$ such that the instability
parabolic for
subspaces is defined over $K^{1/p^{t'}}$.  Consequently, if $E$
is any principal $G$-bundle on $X$ such that
$F^{t'^*}(E)$ is semistable then the induced vector bundle $E_W$ is also semistable.

\end{corollary}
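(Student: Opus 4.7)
The plan is to reduce the corollary to Theorem~\ref{quantification in general} applied to each of the finitely many representations $\wedge^{i}(W)$, $1 \le i \le \dim W$, and then to feed the resulting rationality statement into the Ramanan--Ramanathan criterion (Proposition~\ref{HC}) exactly as in the proof of Theorem~\ref{Main theorem}.

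First I would handle the field-of-definition assertion. For each $i$ with $0<i\le\dim W$, the induced representation $\wedge^i\rho : G\to Gl(\wedge^i W)$ is a finite-dimensional rational representation of $G$ defined over $K$, so Theorem~\ref{quantification in general} produces an integer $t_i$ such that the instability parabolic of every unstable $K$-rational point of $\wedge^i W$ is defined over $K^{1/p^{t_i}}$. Setting $t' := \max_{i} t_i$ gives a single integer which works uniformly in $i$, which is exactly the claim that the instability parabolic for subspaces is defined over $K^{1/p^{t'}}$.

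Next I would deduce the semistability consequence. Given a principal $G$-bundle $E$ on $X$ with $F^{t'{}^*}(E)$ semistable, I want to show $E_W = E(Gl(W))$ is semistable. By Proposition~\ref{HC} it suffices to show that for every maximal parabolic $P\subset Gl(W)$ and every reduction $\sigma$ of $E_W$ to $P$, the instability parabolic of the associated $k(X)$-valued point $\sigma_\circ$ of $E(Gl(W)/P)_\circ$ is rational over $k(X)$. Since maximal parabolics of $Gl(W)$ correspond to Grassmannians, and the Plücker embedding realises $Gl(W)/P$ equivariantly inside $\mathbb{P}(\wedge^{i}W)$ for some $1\le i<\dim W$, the reduction $\sigma_\circ$ lifts (after passing to $k(X)_s$, where $E$ is trivialised, as in the proof of Theorem~\ref{Main theorem}) to an unstable $k(X)_s$-rational point of $\wedge^{i}W$. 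Applying the first part with $K = k(X)$ and $F = k(X)_s$, its instability parabolic is defined over $k(X)^{1/p^{t'}}$; equivalently, after pulling back by $F^{t'}$ the instability parabolic of $F^{t'{}^*}(\sigma_\circ)$ becomes rational over $k(X)$ via the isomorphism $\Spec k(X)^{p^{-t'}}\simeq \Spec k(X)$ induced by Frobenius.

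Finally I would close the argument using the relative tangent bundle degree comparison that appears at the end of the proof of Theorem~\ref{Main theorem}: since $F^{t'{}^*}(E)$ is semistable and, for every reduction to every maximal parabolic of $Gl(W)$, the instability parabolic of the pulled-back section is rational, Proposition~\ref{HC} (or equivalently the criterion via the relative tangent bundle) gives $\deg F^{t'{}^*}(\sigma)^*F^{t'{}^*}(T_\pi) \ge 0$. Using $F^{t'{}^*}(L)\cong L^{p^{t'}}$ for any line bundle $L$, this yields $p^{t'}\cdot\deg T_\sigma \ge 0$, hence $\deg T_\sigma \ge 0$ for every reduction of $E_W$ to every maximal parabolic, and therefore $E_W$ is semistable. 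The main obstacle in this plan is purely bookkeeping — identifying $Gl(W)/P$ with the appropriate $\wedge^i W$ Grassmannian so that the existence of a uniform $t'$ from Theorem~\ref{quantification in general} applies across all maximal parabolics simultaneously; the rest of the machinery is already in place in the earlier sections.
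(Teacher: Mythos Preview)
Your proposal is correct and follows exactly the approach the paper intends: the paper's own proof is the one-line remark that the corollary ``follows immediately from Theorem~\ref{quantification in general} and the proof of Theorem~\ref{Main theorem},'' and you have simply unpacked precisely those two ingredients --- applying Theorem~\ref{quantification in general} to each $\wedge^i W$ and taking the maximum $t'$, then rerunning the Frobenius-pullback/relative-tangent-bundle argument from Theorem~\ref{Main theorem} via Proposition~\ref{HC}.
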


\begin{proof}
Proof follows immediately from theorem \ref{quantification in general} and the proof of theorem \ref{Main
theorem}.
\end{proof}

\end{document}